\documentclass[a4paper]{article}                     
\usepackage[left=2.5cm,right=5cm,top=2.5cm,bottom=2cm,includeheadfoot]{geometry}
\NeedsTeXFormat{LaTeX2e}
%
%
\usepackage{float}
\usepackage{graphicx}
\usepackage{enumerate}
\usepackage{amssymb, amsmath, amsthm}
\usepackage{dsfont}
\usepackage{textcomp}
\usepackage{listings}
\usepackage{color} 
\usepackage{hyperref}
\usepackage{url}
\usepackage{tikz}
\usepackage{pgfplots}

\DeclareMathOperator*{\argmax}{arg\,max}

\lstset{
	basicstyle=\sffamily\footnotesize, 
	keywordstyle=\sffamily\footnotesize\bfseries, 
	identifierstyle=, 
	commentstyle=\sffamily\footnotesize\slshape, 
	stringstyle=\sffamily\footnotesize, 
	showstringspaces=false, 
	numbers=left,
	numberstyle=\sffamily\footnotesize,
	stepnumber=1,
	numbersep=10pt,
	showspaces=false,
	showtabs=false,
	frame=lines,
	morecomment=[l]{\%},
	float=htbp,
	numberbychapter=true
} 

\usepackage{titlesec}
\titleformat{\section}[runin]{\normalfont\bfseries}{\thesection .}{0.5em}{}
\titleformat{\subsection}[runin]{\normalfont\bfseries}{\thesubsection .}{0.5em}{}

\bibliographystyle{siam}

\newtheorem{proposition}{Proposition}[section] 
\newtheorem{defn}[proposition]{Definition}
\newtheorem{assumption}[proposition]{Assumption}
\newtheorem{thm}[proposition]{Theorem}
\newtheorem{lemma}[proposition]{Lemma}

\newtheorem{remark}[proposition]{Remark}

\usepackage[T1]{fontenc}
\usepackage{lmodern}



\usepackage{framed}

\newcommand{\N}{\ensuremath{{\mathbb N}}}

\newcommand{\R}{\ensuremath{{\mathbb R}}}

\newcommand{\Qs}{\ensuremath{Q}}

\newcommand{\E}{\ensuremath{{\mathbb E}}}
\newcommand{\Pro}{\ensuremath{{\,\mathbb P}}}

\newcommand{\yy}{\ensuremath{{y^*}}}

\def\SpecialChap#1{{\let\cleardoublepage\relax\chapter{#1}}}

\definecolor{ao(english)}{rgb}{0.0, 0.5, 0.0}

\definecolor{brown(web)}{rgb}{0.65, 0.16, 0.16}

	\title{Competition versus Cooperation: A class of solvable mean field impulse control problems}
\author{S\"oren Christensen\thanks{Christian-Albrechts-Universit\"at zu Kiel, Mathematisches Seminar,
		Ludewig-Meyn-Str. 4,
		24098 Kiel,
		Germany} \and Berenice Anne Neumann\thanks{University of Trier, Department IV, Universitätsring 19, 54296 Trier, Germany}\and Tobias Sohr\footnotemark[1]}

\begin{document}

\maketitle

\begin{abstract}
	We discuss a class of explicitly solvable mean field type control problems/mean field games with a clear economic interpretation. More precisely, we consider long term average impulse control problems with underlying general one-dimensional diffusion processes motivated by optimal harvesting problems in natural resource management. We extend the classical stochastic Faustmann models by allowing the prices to depend on the state of the market using a mean field structure. 
	In a competitive market model, we prove that, under natural conditions, there exists an equilibrium strategy of threshold-type and furthermore characterize the threshold explicitly. If the agents cooperate with each other, we are faced with the mean field type control problem. Using a Lagrange-type argument, we prove that the optimizer of this non-standard impulse control problem is of threshold-type as well and characterize the optimal threshold. Furthermore, we compare the solutions and illustrate the findings in an example. 	
\end{abstract}

\textbf{Keywords:} mean field games, mean field type control, optimal harvesting, stochastic impulse control, diffusion processes 
\vspace{.8cm}

\textbf{Subject Classifications:} 91A15, 91A25, 49N25, 93E20 \\

\section{Introduction}

Mean field game theory has been introduced by Lasry and Lions \cite{LasryJapanese2007} and by Huang, Malham\'{e}, Caines \cite{HuangNCE2006} to study Nash equilibria of differential games with many players, where each player controls a diffusion. The main feature of these games is that the players do not interact with the others individually, but only through the distribution of all players' states.  This gives rise to apply techniques similar to mean field approximation from physics to obtain approximate equilibria for $N$-player games. Applications include growth models, the production of an exhaustible resource by a continuum of agents, as well as opinion dynamics \cite{CainesHandbook, ParisPrinceton2010}. 
	In the classical diffusion model the equilibria of the limiting mean field game are given by a system of nonlinear partial differential equations with partly initial, partly terminal conditions, if one considers the analytic approach to mean field games, or they are given by a coupled forward backward stochastic differential equation, if one considers the probabilistic approach (for details consider \cite{BensoussanMFG,CarmonaMFG2018,CarmonaMFGPartTwo2018}).  Note, however, that also these systems are mostly intractable. Only in the case of linear dynamics and quadratic costs, explicit solutions have been obtained. More recently, several other types of mean field games have been introduced, such as finite state mean field games \cite{CecchinProbabilistic2018,GomesConti2013, NeumannComputation} or mean field games of stopping \cite{bertucci2018optimal,NutzMFStopping, NutzCompetitionRD, CarmonaBankRun}, in which case it is sometimes possible to obtain other explicitly solvable models which might yield a deeper understanding of the nature of mean field equilibria. First results on mean field games of impulse control for diffusion processes have been obtained in \cite{Bertucci}. The main focus there is on developing a general theory in a Brownian motion model including existence and uniqueness results of equilibria.
	
	Another branch of research covers mean field type control theory, which discusses a connected question. The difference is that in these models there is no competition between the agents, but they are assumed to cooperate. From a mathematical perspective, this means that there is only one decision maker who chooses a control to optimize the expected reward for the whole population.
	We refer to \cite{andersson2011maximum} for an early paper on the maximum principle for such problems and to \cite{Carmona2013} for a comparison of mean field games and mean field type control problems. Some real-world problems have reasonable interpretations both as a mean field game and a mean field type control problem, see, e.g., \cite{doi:10.1137/17M1119196} for an example inspired by pedestrian crowd dynamics.
	
	In this paper, we leave the classical setting of continuous stochastic control, but consider stochastic impulse control problems. Impulse control problems form the mathematical framework to study a continuous time model with interventions at adaptively chosen discrete time points only. 
	Such problems naturally arise whenever costs have to be paid for each interventions, e.g., in portfolio management with constant transaction costs (\cite{EasthamHastings1988}, \cite{Belak2019})  and control of the exchange rate by the central bank (\cite{MundacaOeksendal1998}). An overview on results for jump diffusions is given in \cite{OeksendalSulem2005}, see also \cite{korn1999} for a survey article with financial applications. Many of these articles are based on the fundamental connection between impulse control problems and quasi-variational inequalities developed originally in \cite{BLi}, see also \cite{C13impulse} for more references. This formulation is straightforward for discounted problems. Some more caution is needed for long-term average formulations which, however, play an important role as well, see \cite{doi:10.1137/16M1085991,helmes2017continuous,helmes2017weak,christensen2019nonparametric,christensen2019solution} for some recent references. 
	
	Here we consider a well-known impulse control problem that naturally arises in natural resource management. Its deterministic form is widely used to calculate optimal harvesting strategies and originates in the work of Martin Faustmann back in 1849. Over the last decades, different stochastic extensions of this classical model utilizing diffusions have been suggested and discussed, see \cite{clarke1989tree,willassen1998stochastic,gjolberg2002real,A04,alvarez2006does,shackleton2010harvesting} to name just a few. The underlying question may be formulated as follows:
	Assume that the volume of wood in a forest stand is modelled as a one dimensional diffusion process and that fixed non-zero costs occur for each time harvesting. When should the forest stand be optimally harvested and what is the optimal value?
	
	In this paper, we add as a new feature an interacting component of different agents by letting the reward of the agents depend on the average harvesting rate in the market. More precisely, we consider a mean field framework and assume that the reward depends on the expected harvesting rate, that in turn depends on a joint strategy all other agents are assumed to use. 	
	In this context two problems are of interest: First, we can assume that all agents compete with each other. 
	In this setting every agent wants to maximize his reward given the other agents' strategies and we are interested in finding equilibria of these game. Second, we can assume that all agents corporate with each other. In this setting they jointly choose one strategy that maximizes the reward of all agents together. The first problem is a mean field game, the second problem is a mean field control type problem.
	
	We prove, for a natural set of assumptions, that for both problems optimal strategies of threshold-type exist and characterize these thresholds explicitly. Moreover, we obtain uniqueness of the equilibrium in threshold strategies. Furthermore, we obtain that the threshold for the mean field game is smaller than any threshold for the mean field type control problem.
	As a second step we also consider a second price formation method, where the price depends on the overall supply of wood in the market. In this setting we can again prove the existence of optimal strategies of threshold type for both problems and obtain also a similar characterization result. However, here no uniqueness statement is possible and moreover, we obtain that the thresholds for the mean field game are larger than the thresholds for the mean field type control problem, so the relation is reversed. 

	The remainder of the article is structured as follows:
	In Section \ref{sec:model} we introduce the full model and the standing assumptions. In Section \ref{sec:preliminaries} we describe several preliminary results including the necessary results for the underlying impulse control problems, which are proven in the Appendix \ref{sec:imp_control_toolbox}. Section \ref{sec:game} presents the analysis of the mean field game, Section \ref{sec:problem} presents the treatment of the mean field type control problem. Section \ref{sec:comparison} then provides the comparison result regarding the thresholds for the game and the control problem. Section \ref{sec:EW} then describes the analysis of the second price formation mechanism. Finally, Section \ref{sec:examples} illustrates the theoretical findings in two examples where the dynamics of the forest stand are modeled by a logistic SDE.

\section{Model and Standing Assumptions}\label{sec:model}

In this section we introduce the model as well as the necessary assumptions used in this paper. In the first subsection we describe and motivate the general model.
In the second subsection we then formulate the standing assumptions.

\subsection{The Model}

To model the natural resource under consideration, e.g., in the original model the volume of wood in the forest stand, we use a stochastic process $(X_t)_{t\geq 0}$ that is a regular one-dimensional It\^o-diffusion on $\R_+= (0,\infty)$, whose dynamics is described by \[
dX_t=\mu(X_t)dt+\sigma(X_t)dW_t\] for a standard Brownian motion $W$ and continuous functions $\mu:\R_+\rightarrow \R$, $\sigma:\R_+\rightarrow \R_+$ that are sufficiently regular to guarantee a unique (strong) solution to the stated SDE. As usual, we denote by $\Pro_x$ probabilities for the process conditioned to start in the initial state $x$ and write $\E_x$ for the corresponding expectation operator.
We furthermore denote the speed measure by $M$ and the scale function by $S$. By assumption, their densities $m$ and $s$, resp., are given by (see e.g. \cite{borodin-salminen}) 
\begin{align*}
m(x)=\frac{2}{\sigma^2(x)}\exp\left({\int^{x}_a\frac{2\mu(y)}{\sigma^2(y)}dy} \right),\;\;s(x)=\exp \left({-\int^{x}_a\frac{2\mu(y)}{\sigma^2(y)}dy}\right)
\end{align*}
for some $a\in \R_+$.
The forest owner can at any time decide to cut the forest, in which case the process is restarted at an externally given level $y_0>0$. In contrast to other models, we follow the classical Faustmann literature and assume that $y_0$ cannot be chosen by the decision maker, which simplifies some expressions. Let us, however, remark that the basic ideas used in the following could be taken over to more general settings as well. Formally, the cutting decision is modelled by a sequences of stopping times $R=(\tau_n)_{n \in \mathbb{N}}$ satisfying $\lim_{n \rightarrow \infty} \tau_n = \infty$ a.s. 
We just consider strategies $R=(\tau_n)_{n \in \mathbb{N}}$ such that the controlled process fulfills $X^R_{\tau_n-} \ge y_0$ for all $n \in \mathbb{N}$, which we call \textit{admissible strategies} in the following.
Here, by $X^R$ we denote the solution to 
\[
X^R_t=X^R_0+\int\limits_0^t\mu(X^R_s)\ d s+\int\limits_0^ t\sigma(X^R_s)\ d W_s -\sum\limits_{n; \ \tau_n \leq t}(X^R_{\tau_n-}-y_0).
\]
The most interesting strategies in practice as well as in our following discussions are threshold strategies. These are strategies $R=(\tau_n)_{n \in \N}$ such that for a given threshold $y>y_0$ we have (with $ \tau_0=0$): \[
\tau_n=\inf \{t\geq \tau_{n-1}:X^R_t\geq y \},\;n \in \N.
\] We denote such a threshold strategy for a threshold $y>y_0$ by $R(y)$. 
In Figure \ref{fig:ssplot} we present a simulated sample path for such a strategy.

\begin{figure}
	\centering
	\includegraphics[width=0.9\linewidth]{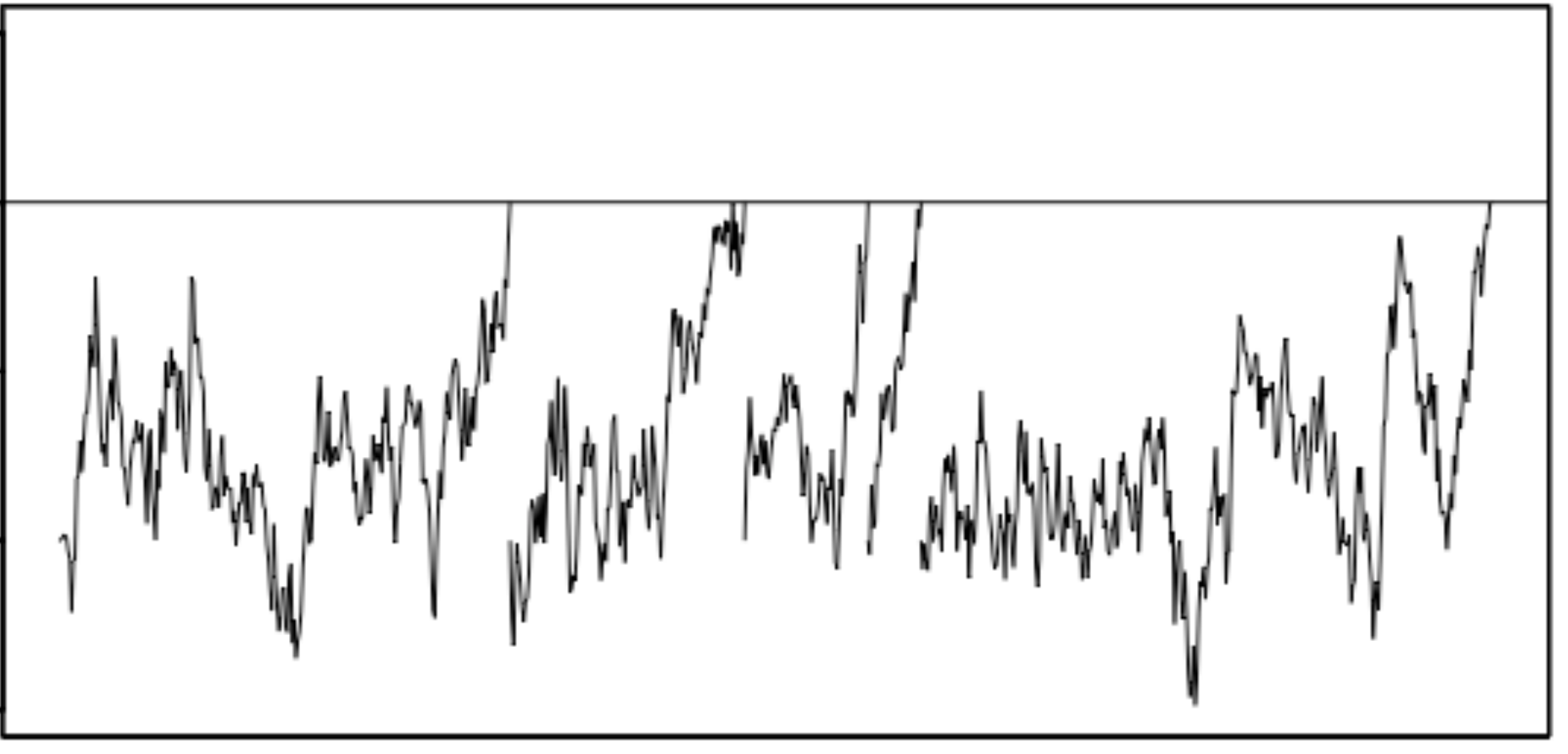}
	\caption{Simulated sample path with threshold strategy}
	\label{fig:ssplot}
\end{figure}

In the classical version of this famous impulse control problem, the forest owner chooses an admissible strategy in order to maximize his long term average reward 
\[
\liminf_{T\rightarrow\infty} \frac{1}{T}\E_x\left(\sum_{n:\tau_n\leq T}(\gamma(X_{\tau_n-}^R)-K)\right),
\] where $K>0$ denotes the fixed costs for each cutting decision and $\gamma(X_{\tau_n-}^R)$ can be interpreted as the income earned for selling the available wood at time $\tau_n-$.

However, this classical impulse control problem lacks one -- often crucial -- feature. It ignores that there might be other forest owners who also grow and sell their wood at the same time and therefore influence the prices.
Here we introduce these other agents in a mean field type fashion.
More precisely, we assume that there is a market for wood consisting of a continuum of agents with the same structure (dynamics and reward functional) as the decision maker under consideration. 
We remark that considering a continuum of agents is the natural way to formalize agents with negligible individual influence on the market outcome.

Since we consider a long-term average reward structure it is sensible to consider \textit{admissible stationary strategies} formally introduced in Definition \ref{def:admi_stat_strat} below.
Moreover, in this context it is natural to assume that the wood price depends on the average harvesting rate.
If all other agents use a joint strategy $Q=(\sigma_n)_{n \in \mathbb{N}}$, then a representative agent's forest stand is described by a process $\hat{X}^Q$ with the same dynamics as $X^Q$ and some initial distribution representing the collective distribution of the forest stands at time $0$. 
By standard renewal theory, the average collective harvesting rate is given by
\[
\frac{\mathbb{E}[X_{\tau-}^Q] -y_0}{\mathbb{E}[\tau]},
\]
where $\tau$ is the stopping time generating $Q$ (see Definition \ref{def:admi_stat_strat}). 

Now, to model our reward, we define for each pair of admissible strategies $R, \ \Qs$, with $\Qs$ stationary, the expected reward function \[
J_x(R,\Qs):= \liminf_{T\rightarrow\infty} \frac{1}{T}\E_x\left(\sum_{n:\tau_n\leq T}\left(\gamma \left(X_{\tau_n-}^R,\frac{\mathbb{E}[X_{\tau-}^Q] -y_0}{\mathbb{E}[\tau]}\right)-K\right)\right),
\]
with the interpretation that $J_x(R,\Qs)$ is the long term average reward obtained by an individual player starting in state $x$ playing according to strategy $R$, while the whole population plays according to strategy $\Qs$. Here, $\gamma(x,z)$ is the payoff function that models the reward the decision maker gets each time harvesting, which we assume in our model to depend on the average harvesting rate $\frac{\mathbb{E}[X_{\tau-}] -y_0}{\mathbb{E}[\tau]}$.

In this setting two optimization problems naturally arise. 
First, we could consider the behaviour of agents in a competitive market. 
In this setting all agents would -- given the choice of the others which they cannot influence -- maximize their reward. 
The central interest now lies in finding equilibrium strategies for this problem, that is finding strategies such that if the whole population of agents plays according to such an equilibrium strategy, then the individual agent has no incentive to deviate from doing so.
Formally, we search for admissible stationary strategies such that
$$J_x(\Qs,\Qs)  = \max_{R\mbox{ \footnotesize admissible}} J_x(R, \Qs)\mbox{ for all }x.$$

The second situation of interest is the case when all agents cooperate, that is all agents together choose a strategy  in order to maximize the overall reward. Mathematically speaking, we search for admissible stationary strategies such that
$$J_x(\Qs,\Qs) = \max_{R\mbox{ \footnotesize admissible}} J_x(R, R)\mbox{ for all }x.$$ 
The first problem is a classical problem in the theory of mean field games, the second is a classical problem in mean field type control theory, which both up to now have not been considered in detail in the context of impulse control problems. 
As in the standard theory, mean field equilibria are usually not solutions of the mean field type control problem and vice versa.
Indeed, it might be societal beneficial to coordinate on some strategy where unilateral deviations are still profitable and it might also happen that unilateral deviations are not profitable, whereas collective deviations are. 

\subsection{Assumptions}

In this section we introduce the standing Assumptions \ref{ass:process} and \ref{annahmenExistenzGG}, which we assume to hold for the rest of the paper, unless explicitly stated otherwise. 

\begin{assumption}\label{ass:process} 
	\begin{enumerate}
		\item The process $X$ is positively recurrent with integrable stationary distribution. The stationary distribution is, with a slight abuse of notation, denoted by $\Pro(X_\infty\in dx)$. Furthermore, we assume that $0$ is an entrance-boundary.
		\item There is a $y_1\geq y_0$ such that the drift function\ $\mu$ is strictly decreasing on $(y_1,\infty)$ and increasing on $(0,y_1]$.
		\item $\lim_{x\to\infty}s(x)=\infty.$
	\end{enumerate}

\end{assumption}

Let us briefly comment on the assumption regarding the process $X$:

1. In terms of the speed measure, the existence of a stationary distribution means that $M(\R_+)<\infty$. Then, up to standardization, $M$ is the stationary distribution of $X$
and we have \[\E [X_\infty ]=\int_{\R_+}x\frac{m(x)}{M(\R_+)}dx<\infty.\]
The assumption that $M(\R_+)<\infty$ implies that the boundary $\infty$ is natural and $0$ is either entrance or natural (see \cite[p.234]{KarlinTaylor}). In analytical terms, our additional assumption that 0 is not natural reads, for arbitrary $x>0$,  as
\[\int_0^x(S(x)-S(y))M(dy)<\infty.\]
Intuitively, it means that the process can reach an interior point from the state 0, but cannot reach the state 0 from an interior point.

2. The assumption on the drift $\mu$ intuitively means that if the quantity of available resources is low enough -- in our main example this means, young, new trees just got planted -- there is a random, yet on average positive growth of the drift. But at some point a level of saturation, for example due to the limited available space, is reached and there is no more room for further growth. Thus the average growth rate starts shrinking. 
This assumption not only seems practical on the intuitive level, it also includes the major growth models used for modelling natural resources. 
So it is satisfied the dynamics of the non-random Richards curve, which is a standard deterministic model for plant growth (\cite{Richardsbaum59,Pretsch19}) and also one of the most widely used models to describe biologic growth in a random environment, namely the generalized logistic (Perlhurst-Vearl-)diffusion satisfies this assumption.
This model is given by the SDE $$
dX_t=aX_t(1-bX_t)dt+\sigma(X_t)dW_t,  $$
where $a,b>0$ and $\sigma$ is a positive function (when $\sigma$ is linear, this becomes the (standard) logistic diffusion) and it will be  analysed in our example in Section \ref{sec:examples}.  

3. The assumption on the speed density $s$ could be relaxed, see the proofs below, but holds in the practically relevant examples. 

\begin{remark}\label{rem:zeta}
Assumption \ref{ass:process}.1 means that for all $x,y \in  \R_+$ we have $\E_x( \mathring\tau_y)<\infty$ for the hitting time
$ \mathring\tau_y=\inf\{t\geq0:X_t=y\},$
 see \cite[II.12]{borodin-salminen}. Hence we can define the function \begin{align}\label{eq:xi}
\xi:  \R_+ \rightarrow [0,\infty); \ y \mapsto \E_{y_0}(\tau_y)
\end{align}
for the threshold time $\tau_y=\inf\{t\geq0:X_t\geq y\}.$
We then obtain $\xi(x)$ for $x\geq y_0$ as 
\begin{align*}
\xi(x)&=  \int_{y_0}^x(S(x)-S(y))m(y)\ dy+(S(x)-S(y_0))M[0,y_0]
\end{align*} 
see \cite{helmes2017continuous}. 
In particular,  $\xi\in C^2$ on $[y_0,\infty)$. 
\end{remark}

As a next step let us explicitly define the considered strategies:

\begin{defn}\label{def:admi_stat_strat}
	An impulse control strategy $R=(\tau_n)_{n\in\N}$  is called admissible stationary strategy if there exists a stopping time $\tau$ such that 	$\tau_{n+1}=\tau\circ\theta_{\tau_n}+\tau_n,$
	$X_\tau^R\geq y_0$ and $\tau$ has a finite mean under $\Pro_{y_0}$.  Here, $\theta$ denotes the shift operator. 
\end{defn}

\begin{remark}
	\label{RemarkThresholdStrategies}
	Note that threshold strategies $R(y)$ for $y>y_0$ are admissible stationary strategies ($\tau=\tau_y:=\inf\{t\geq 0:X_t\geq y\}$). The integrability holds by general theory for diffusion processes.
\end{remark}

As a first step we show that we can restrict our attention on a compact interval of possible harvesting rates. Indeed, classical theorems on optimal stopping (and also the results in \ref{sec:imp_control_toolbox}) yield that the value of the stopping problem 
\[
\max_{\tau} \frac{\mathbb{E}[X_{\tau-}]-y_0}{\mathbb{E}[\tau]}
\]
is attained by a threshold strategy. Thus, the following result describes the maximal possible harvesting rate.

\begin{lemma}
	\label{Lemma:UniqueOptimizer}
	Let $\tilde{K} \ge 0$. Then the function 
	$k_{\tilde{K}}:(y_0, \infty) \rightarrow \mathbb{R}, \quad y \mapsto \frac{y-y_0-\tilde{K}}{\xi(y)}$, $\xi$ defined in \eqref{eq:xi},
	has a unique maximizer $\hat{y}_{\tilde{K}}$ and is strictly decreasing for all $y>\hat{y}_{\tilde{K}}$. For constants $\tilde{K}_1 < \tilde{K}_2$ we moreover have that $\hat{y}_{\tilde{K}_1} < \hat{y}_{\tilde{K}_2}$.
\end{lemma}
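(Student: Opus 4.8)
The plan is to reduce the optimization to a sign analysis of the numerator of $k_{\tilde K}'$. Since $\xi(y_0)=0$ and, differentiating the closed form in \eqref{eq:xi}, one gets $\xi'(y)=s(y)M[0,y]>0$ on $(y_0,\infty)$, I would write
\[
k_{\tilde K}'(y)=\frac{\xi(y)-(y-y_0-\tilde K)\,\xi'(y)}{\xi(y)^2}=:\frac{\phi(y)}{\xi(y)^2},
\]
so the sign of $k_{\tilde K}'$ equals the sign of $\phi$. The entire statement then follows once I show that $\phi$ changes sign exactly once on $(y_0,\infty)$, from positive to negative.

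First I would settle the boundary behaviour. At the left end $\phi(y_0+)=\tilde K\,\xi'(y_0)\ge 0$, and any critical point satisfies $\xi(y)=(y-y_0-\tilde K)\xi'(y)$ with $\xi,\xi'>0$, forcing $N(y):=y-y_0-\tilde K>0$ there. For the right end I would invoke Assumption \ref{ass:process}: the finite stationary distribution gives $M[0,y]\uparrow M(\R_+)<\infty$, while $s(y)\to\infty$ makes $S$ grow superlinearly, so that $\xi'(y)\sim M(\R_+)s(y)$, $\xi(y)\sim M(\R_+)S(y)$, and $\phi(y)\sim M(\R_+)\big(S(y)-(y-y_0-\tilde K)s(y)\big)\to-\infty$. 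Hence $\phi$ is eventually negative, and continuity plus $\phi>0$ somewhere yields existence of a maximizer in the interior.

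The heart of the argument is the single-crossing property. Differentiating gives the clean identity $\phi'(y)=-N(y)\,\xi''(y)$, and a short computation from $\xi'=sM[0,y]$ together with $sm=2/\sigma^2$ and $s'/s=-2\mu/\sigma^2$ yields
\[
\xi''(y)=\frac{2}{\sigma^2(y)}\big(1-\mu(y)\,\xi'(y)\big).
\]
Since every critical point has $N>0$, it suffices to prove that $\xi''>0$ at each critical point: then at each zero of $\phi$ one has $\phi'<0$, every zero is a downcrossing, and (given $\phi(y_0+)\ge 0$ and $\phi\to-\infty$) there is exactly one. Now $\xi''>0\Leftrightarrow\mu(y)\xi'(y)<1$, which is automatic wherever $\mu\le 0$. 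The remaining case $\mu>0$ is where Assumption \ref{ass:process}.2 enters: setting $\beta:=1/\xi'$ one finds $\beta'=\tfrac{2\beta}{\sigma^2}(\mu-\beta)$, so $\beta$ has a horizontal tangent exactly where it meets $\mu$, and since $\mu$ is unimodal with peak $y_1$ the intersections of $\mu$ and $\beta$ are upcrossings below $y_1$ and downcrossings above $y_1$; this confines $\{\xi''<0\}$ to a single interval containing $y_1$. I would then show the tangency condition $\xi=N\xi'$ cannot be satisfied inside this concavity interval. I expect this last step — ruling out critical points in the region where the drift is large and positive — to be the main obstacle, and it is precisely where the monotonicity of $\mu$ must be used quantitatively rather than just qualitatively.

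Finally, the comparative statics comes almost for free once single-crossing is established. Writing the critical-point equation as $\Psi(y,\tilde K):=\xi(y)/\xi'(y)-(y-y_0)+\tilde K=0$, the maximizer $\hat y_{\tilde K}$ is the unique downcrossing of $\Psi(\cdot,\tilde K)$, which is positive to its left. For $\tilde K_1<\tilde K_2$ one has $\Psi(\hat y_{\tilde K_1},\tilde K_2)=\Psi(\hat y_{\tilde K_1},\tilde K_1)+(\tilde K_2-\tilde K_1)=\tilde K_2-\tilde K_1>0$, so $\hat y_{\tilde K_1}$ lies strictly to the left of the crossing $\hat y_{\tilde K_2}$, giving $\hat y_{\tilde K_1}<\hat y_{\tilde K_2}$.
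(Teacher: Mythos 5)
Your reduction is the same as the paper's: the sign of $k_{\tilde{K}}'$ is the sign of the numerator $\phi(y)=\xi(y)-(y-y_0-\tilde{K})\xi'(y)$ (the paper's $\tilde{F}_{\tilde{K}}$), you use the same identity $\phi'(y)=-N(y)\xi''(y)$, and your Riccati analysis of $\beta=1/\xi'$ is a rederivation of the concavity/convexity statement in Lemma \ref{xiconvexityLEM}. The genuine gap is exactly the step you flag as ``the main obstacle'': ruling out zeros of $\phi$ in the concavity region. As it stands you have no proof, and your guess that this requires a quantitative use of the monotonicity of $\mu$ is mistaken --- it follows qualitatively from objects you already have, and this is how the paper closes the argument. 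Wherever $N(y)\le 0$ one trivially has $\phi(y)\ge\xi(y)>0$; and on the part of the concavity region where $N(y)>0$, your own identity gives $\phi'=-N\xi''\ge 0$, so $\phi$ is nondecreasing there, starting from $\phi(y_0+\tilde{K})=\xi(y_0+\tilde{K})>0$. Hence $\phi$ is strictly positive on the whole concavity region, every zero of $\phi$ lies in the strict convexity region, and your downcrossing argument (and your comparative-statics step, which then coincides with the paper's) goes through. Note, however, that this fix needs the concavity region to be an \emph{initial} interval $[y_0,y_2]$, which is what Lemma \ref{xiconvexityLEM} actually proves (for $x\le y_1$ one has $I(x)=\int_0^x(\mu(u)-\mu(x))m(u)\,du\le 0$ since $\mu(u)\le\mu(x)$ there, so $\xi''\le 0$ on all of $(y_0,y_1]$). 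Your phase-plane argument only confines $\{\xi''<0\}$ to \emph{some} interval containing $y_1$, leaving open a convex piece $(y_0,\alpha)$ before it; with only that structure, $\phi$ could cross zero downwards on $(y_0,\alpha)$, rise again on the concave interval, and recross, so single crossing would not follow.

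A secondary flaw is the right-end behaviour. The inference ``$s\to\infty$, so $S$ grows superlinearly, so $S(y)-N(y)s(y)\to-\infty$'' is not valid in general: a scale density with slowly increasing plateaus separated by thin tall spikes carrying most of the integral has $s\to\infty$ while $S(y)-N(y)s(y)\to+\infty$ along the plateaus. It can be repaired under the standing assumptions: Assumptions \ref{ass:process}.2 and \ref{ass:process}.3 force $\mu<0$ eventually (otherwise $s$ would eventually be non-increasing, contradicting $s\to\infty$), hence $s$ is eventually increasing, and then $S(y)\le S(y_*)+(y-y_*)s(y)$ gives $\phi(y)\to-\infty$. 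The paper avoids this issue altogether by working with $k_{\tilde{K}}$ itself: $\lim_{y\to\infty}k_{\tilde{K}}(y)=\lim_{y\to\infty}1/\xi'(y)=0$ by l'H\^opital and $\xi'=s\,M[0,\cdot]$, which together with $k_{\tilde{K}}(y_0+\tilde{K}+1)>0$ and the behaviour at $y_0$ yields an interior maximizer, hence a zero of $\phi$, and uniqueness then comes from the sign structure above. (Also, both your left-end claim that $\phi>0$ somewhere and the paper's limit $k_{\tilde{K}}(y_0+)=-\infty$ silently use $\tilde{K}>0$; the case $\tilde{K}=0$ additionally needs strict concavity of $\xi$ near $y_0$.)
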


In order to prove this lemma we first prove the following statement which is perhaps of more general interest. See also \cite{MR2023887} for a related result. 

\begin{lemma}\label{xiconvexityLEM}
	Assume that there exists $y_1\geq y_0$ such that $\mu$ is (strictly) decreasing on $[y_1,\infty)$ and increasing on $(0, y_1]$. Then, there exists $y_2\in[y_1,\infty]$ such that $\xi$ (see equation \ref{eq:xi}) is (strictly) convex on $(y_2,\infty)$ and concave on $[y_0, y_2]$.	
\end{lemma}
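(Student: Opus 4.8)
The plan is to reduce the statement entirely to the sign of $\xi''$ and to control how that sign may change as $x$ increases. First I would differentiate the explicit formula \eqref{eq:xi} twice. Writing $\xi(x)=S(x)M[0,x]-\int_{y_0}^xS(y)m(y)\,dy-S(y_0)M[0,y_0]$ and using $S'=s$, $M'=m$ together with $s'=-\tfrac{2\mu}{\sigma^2}s$ and $sm=\tfrac{2}{\sigma^2}$, one gets the compact expressions $\xi'(x)=s(x)M[0,x]>0$ and $\xi''(x)=\tfrac{2}{\sigma^2(x)}\bigl(1-\mu(x)s(x)M[0,x]\bigr)$ (equivalently $\tfrac12\sigma^2\xi''+\mu\xi'=1$, the usual mean–hitting–time ODE). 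Since $\tfrac{2}{\sigma^2}>0$, convexity of $\xi$ is governed by the sign of the auxiliary function $V:=\tfrac{\sigma^2}{s}\xi''=\tfrac{2}{s}-2\mu M[0,\cdot]$: $\xi$ is convex where $V\ge0$ and concave where $V\le0$.

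The central computation is that, again using $s'=-\tfrac{2\mu}{\sigma^2}s$ and $sm=\tfrac{2}{\sigma^2}$, one finds $V'(x)=-2\mu'(x)M[0,x]$, so the sign of $V'$ is opposite to that of $\mu'$ (recall $M[0,x]>0$). Hence $V$ is nonincreasing on $(0,y_1]$ and nondecreasing on $(y_1,\infty)$, attaining its minimum at $y_1$. On $(y_1,\infty)$ this already settles the matter: a nondecreasing continuous $V$ crosses the level $0$ at most once, so setting $y_2:=\inf\{x\ge y_1:V(x)\ge0\}\in[y_1,\infty]$ gives $V\le0$ on $[y_1,y_2]$ and $V\ge0$ on $[y_2,\infty)$, i.e.\ convexity on $(y_2,\infty)$. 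The remaining task — and the step I expect to be the main obstacle — is to show that $\xi$ is already concave on the whole of $[y_0,y_1]$, so that no spurious convex piece precedes $y_2$ and the asserted $y_2\ge y_1$ is genuine. Monotonicity of $V$ only yields $V\le V(y_0)$ on $[y_0,y_1]$, so I need an independent argument that $V(y_0)\le0$, and here the boundary behaviour at $0$ must be used.

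The key is the identity $\tfrac{d}{dx}\tfrac1s=\mu m$ together with $\tfrac1{s(0^+)}=0$, the latter holding precisely because $0$ is an entrance boundary (so $S(0^+)=-\infty$, hence $\int_0 s=\infty$ and $s(0^+)=\infty$). Integrating gives $\tfrac1{s(x)}=\int_0^x\mu(y)m(y)\,dy$, and substituting this into $V=\tfrac2s-2\mu M[0,\cdot]$ produces the transparent representation
\[
V(x)=2\int_0^x\bigl(\mu(y)-\mu(x)\bigr)m(y)\,dy.
\]
For $x\in(0,y_1]$ the integrand is $\le0$ since $\mu$ is increasing on $(0,y_1]$, whence $V(x)\le0$ on all of $(0,y_1]$, in particular $V(y_0)\le0$ and $V(y_1)\le0$. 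Combined with the previous paragraph this gives $V\le0$ on $[y_0,y_2]$ and $V\ge0$ on $[y_2,\infty)$, i.e.\ the claimed concave/convex split. The strict versions follow verbatim: strict monotonicity of $\mu$ makes the integrand strictly negative (strict concavity on $[y_0,y_1]$) and makes $V$ strictly increasing on $(y_1,\infty)$ (strict convexity beyond $y_2$). The only convergence point to verify is that the improper integral $\int_0^x\mu m$ is finite, which is automatic since it equals $1/s(x)<\infty$.
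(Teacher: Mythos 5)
Your proof is, at its core, the same as the paper's: your $V=\tfrac{\sigma^2}{s}\xi''$ is exactly twice the function $I(x)=\int_0^x(\mu(y)-\mu(x))m(y)\,dy$ that the paper works with, and both arguments conclude from the derivative formula $I'(x)=-\mu'(x)M[0,x]$ that $\xi''$ changes sign at most once, from negative to positive (your explicit definition of $y_2$ as $\inf\{x\ge y_1: V(x)\ge 0\}$ just makes the paper's terse final sentence precise). The genuine difference lies in how the normalization identity $1=s(x)\int_0^x\mu(y)m(y)\,dy$ is obtained. The paper derives it probabilistically: apply Dynkin's formula to $\E_x[X_{\tau_b}]=b$, rewrite the expected occupation time via the Green function, and differentiate in $x$. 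You derive it analytically from $\tfrac{d}{dx}\tfrac1s=\mu m$ together with the boundary condition $\tfrac{1}{s(0^+)}=0$. Your route avoids stochastic analysis entirely, but it transfers the whole burden onto the boundary behaviour of $s$ at $0$ — and that is precisely where your write-up has a loose step.

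The step ``$\int_0 s=\infty$, hence $s(0^+)=\infty$'' is a non sequitur: a continuous positive function can have a divergent integral near $0$ while oscillating, so divergence of $\int_0 s$ gives neither $s(0^+)=\infty$ nor the statement you actually need, $\lim_{x\to 0^+}1/s(x)=0$. The conclusion is nevertheless true in your setting, and the repair is short, using the very monotonicity hypothesis of the lemma: since $\mu$ is increasing on $(0,y_1]$, it has constant sign on some interval $(0,\delta)$, so $(1/s)'=\mu m$ has constant sign there and $1/s$ is monotone near $0$; hence $\lim_{x\to0^+}1/s(x)$ exists in $[0,\infty]$. If this limit were positive (or $+\infty$), then $s$ would be bounded on a neighbourhood of $0$, forcing $\int_0 s<\infty$ and contradicting $S(0^+)=-\infty$, which does follow from the entrance assumption. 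Hence $1/s(0^+)=0$, and, as you note, convergence of the improper integral $\int_0^x\mu m$ then comes for free by integrating $(1/s)'=\mu m$ from $\epsilon$ to $x$ and letting $\epsilon\to0$. With this one-line patch your proof is complete and correct.
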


\begin{proof}
	For all $x,b\in\R_+$ with $x<b$ using Dynkin's formula and standard diffusion theory, see e.g. \cite[Chapter 15.3]{KarlinTaylor}, it holds that
	\begin{align*}
		b&=\E_x[X_{\tau_b}]=x+\E_x \left[ \int_0^{\tau_b}\mu(X_t)dt \right]\\
		&=x+\int_x^b(S(b)-S(y))\mu(y)M(dy)+(S(b)-S(x))\int_0^x\mu(y)M(dy).
	\end{align*}
	Differentiating with respect to $x$ yields
	\begin{align}\label{eq:s}
		1=s(x)\int_0^x\mu(y)m(y)dy.
	\end{align}
	Now, we differentiate twice in the formula for $\xi$ in Remark \ref{rem:zeta} and obtain
	\begin{align*}
		\xi'(x)=s(x)M[0,x],\;\;\xi''(x)=s'(x)M[0,x]+s(x)m(x).
	\end{align*}
	Taking $s'(x) = \frac{2 \mu(x)}{\sigma^2(x)} s(x)$, $m(x)=2/(s(x)\sigma^2(x))$ and \eqref{eq:s} into account, we obtain
	\begin{align*}
		\xi''(x)&=-\frac{\mu(x)}{\sigma^2(x)/2}s(x)M[0,x]+\frac{1}{\sigma^2(x)/2}\\
		&=\frac{2s(x)}{\sigma^2(x)}\int_0^x(\mu(y)-\mu(x))m(y)\ dy\\
		&=:\frac{2s(x)}{\sigma^2(x)}I(x).
	\end{align*}
	Noting that $I'(x)=-\mu'(x)M[0,x]$, we see that $I$ is increasing if $\mu$ is decreasing and vise versa. Therefore, under our assumptions, $\xi''$ changes sign at most once and from negative to positive.
\end{proof}

\begin{proof}[Proof of Lemma \ref{Lemma:UniqueOptimizer}]
	Let us start by observing that
	\begin{align*}
		\lim_{y \rightarrow y_0} \frac{y-y_0- \tilde{K}}{\xi(y)} &= - \infty, \\
		\lim_{y \rightarrow \infty}  \frac{y-y_0- \tilde{K}}{\xi(y)} &= \lim_{y \rightarrow \infty} \frac{1}{\xi'(y)} = \lim_{y \rightarrow \infty} \frac{1}{2 s(y) M[0,y]} = 0
	\end{align*} and $k_{\tilde{K}}(\tilde{K} +y_0+1)>0$. Thus, a maximum point exists and is given by a critical point, i.e. by a root of 
	\begin{equation*}
		\tilde{F}_{\tilde{K}} (y) = \xi(y) - (y-y_0-\tilde{K}) \xi'(y),
	\end{equation*} since it suffices to consider the numerator of the first order condition as the denominator is always positive.
	For all $y < y_0 + \tilde{K}$ we have $\tilde{F}_{\tilde{K}}(y) >0$ since $\xi'(y)>0$ for all $y \ge y_0$. Moreover, let $y_2$ be as in Lemma \ref{xiconvexityLEM}. Then for $y$ in the (possibly empty) interval $(y_0 + \tilde{K}, y_2)$ we have
	$$\frac{\partial}{\partial y} \tilde{F}_{\tilde{K}}(y) = - \underbrace{(y-y_0-\tilde{K})}_{>0} \underbrace{\xi''(y)}_{\le 0} \ge 0.$$
	For $y > \max \{y_0 + \tilde{K}, y_2\}$ we have
	$$\frac{\partial}{\partial y} \tilde{F}_{\tilde{K}}(y) = - \underbrace{(y-y_0-\tilde{K})}_{>0} \underbrace{\xi''(y)}_{> 0} < 0.$$
	In total we obtain that $\tilde{F}_{\tilde{K}}$ is positive on $[y_0, \max \{y_0 + \tilde{K}, y_2\}]$ and is strictly decreasing after $\max \{y_0 + \tilde{K}, y_2\}$. Therefore, it has exactly one root which furthermore lies in $[\max \{y_0 + \tilde{K}, y_2\}, \infty)$, which implies that $k_{\tilde{K}}$ has exactly one maximum point and that this maximum is attained at a point in $[\max \{y_0 + \tilde{K}, y_2\}, \infty)$. Since the derivative is strictly negative for $y> \max \{y_0 + \tilde{K}, y_2\}$ and $\hat{y}_{\tilde{K}} \ge \max \{y_0 + \tilde{K}, y_2\}$ we obtain that $k_{\tilde{K}}$ is strictly decreasing for $y> \hat{y}_{\tilde{K}}$.
	
	Let now $\tilde{K}_1, \tilde{K}_2 \ge 0$ be such that $\tilde{K}_1 < \tilde{K}_2$. Furthermore, let $\hat{y}_1$ be the unique maximizer of $k_{\tilde{K}_1}$ and $\hat{y}_2$ be the unique maximizer of $k_{\tilde{K}_2}$. Then $\hat{y}_1$ solves 
	$$\xi(\hat{y}_1) - (\hat{y}_1 - y_0 - \tilde{K}_1) \xi'(\hat{y}_1)=0$$ and $\hat{y}_1 \ge \max \{y_2, \tilde{K}_1+y_0\}$ by the previous discussion. 
	Therefore, we have $\hat{y}_1 \ge y_2$, thus
	\begin{align*}
		\tilde{F}_{\tilde{K}_2}(\hat{y}_1) &= \xi(\hat{y}_1) (\hat{y}_1 - y_0 - \tilde{K}_2) \xi'(\hat{y}_1) \\
		&= \underbrace{\xi(\hat{y}_1) - (\hat{y}_1 -y_0 - \tilde{K}_1) \xi'(\hat{y}_1)}_{=0} + \underbrace{(\tilde{K}_2-\tilde{K}_1)}_{>0} \underbrace{\xi'(\hat{y}_1)}_{>0} >0.
	\end{align*}
	Since by the previous discussion $\tilde{F}_{\tilde{K}_2}$ is decreasing on $[\max \{y_2, y_0 + \tilde{K}_2\}, \infty)$ and $\tilde{F}_{\tilde{K}_2}$ is positive in the beginning of this interval, the unique root $\hat{y}_2$ of $\tilde{F}_{\tilde{K}_2}$ lies right of $\hat{y}_1$, which proves the desired claim.	
\end{proof}

Thus, we can restrict our attention to the compact interval $[0, \frac{\hat{y}_0-y_0}{\xi(\hat{y}_0)}]$ of average harvesting rates. 
Now we are in the position to conclude the description of the model by adding the following assumption:

\begin{assumption}\label{annahmenExistenzGG}
		The function $\gamma$ is of the form $\gamma(y,z)=(y-y_0)\varphi(z)$ for a continuously differentiable and strictly decreasing function  $\varphi: [0, \frac{\hat{y}_0-y_0}{\xi(\hat{y}_0)}]\rightarrow \mathbb{R}_+$.
\end{assumption}

We highlight that it would be possible to formulate more general assumptions allowing for a reward function $\gamma(y,z):[y_0, \infty) \times [0,\frac{\hat{y}_0-y_0}{\xi(\hat{y}_0)}]\rightarrow\R$ that is twice continuously differentiable in the first argument and decreasing in the second argument. However, this would yield to rather technical additional assumptions explicitly requiring the properties derived in Section \ref{sec:preliminaries}. 
Nonetheless, our restriction on this assumption has mainly economic reasons, since such a general payoff function does not have a clear economic interpretation. Indeed, in our setting of a continuum of players, a payoff function ``quantity times price'' is the only sensible formulation of the fact that agents have a negligible impact on the market price.

\section{Preliminaries}\label{sec:preliminaries}

This section collects preliminary results regarding the associated classical impulse control problems necessary to discuss the mean field game and the mean field type control problem in the subsequent sections. 

Let $f: \mathbb{R}_+ \rightarrow \mathbb{R}$ be an increasing and continuous function satisfying $f(y_0)=0$ and let $h: \mathbb{R}_+ \rightarrow \mathbb{R}_+$ be a continuous function satisfying the linear growth condition $h(x) \le c(1+x)$ for some $c>0$.
Then the control problem of interest is given as maximizing
$$\liminf_{T \rightarrow \infty} \frac{1}{T} \mathbb{E}_x \left[ \sum_{n: \tau_n \le T} (f(X_{\tau_n-}) - K) - \int _0^T h(X_s) ds \right]$$ among all admissible strategies.
To analyse the control problem define for all $x \in \mathbb{R}_+$ the auxiliary value function $$v^h(x):= \sup_{Q} \liminf_{T \rightarrow \infty} \frac{1}{T} \mathbb{E}_x \left[ \sum_{n: \tau_n \le T} (f(X_{\tau_n-}^Q) - K) -\int _0^T h(X_s^Q) ds \right]$$
for $Q=(\tau_n)_n$. We often suppress the dependence on $Q$ in the following when the context is clear. 
	
\begin{proposition}
	\label{ControlToolboxTheorem}
	Assume there is a maximizer $y^\ast$ in $[y_0,\infty)$ of
	\begin{equation}
		\label{equation:Zielfunktion}
		y\mapsto \frac{f(y)-K- \mathbb{E}_{y_0}\left[ \int_0^{\tau_y} h(X_s) ds\right]}{\xi(y)} 
	\end{equation}
for the uncontrolled process\ $X$, 
	then for all $x \in \mathbb{R}_+$ we have 
	$$v^h(x)= \sup_{y>y_0} \frac{f(y) -K- \mathbb{E}_{y_0}\left[ \int_0^{\tau_y} h(X_s) ds\right]}{\xi(y)}.$$
	Furthermore, the threshold strategy $R(y^\ast)$ is optimal for all $x \in \mathbb{R}_+$ and $R(y^\ast)$ is the unique optimizer amongst threshold strategies, if $y^\ast$ is the unique maximizer of \eqref{equation:Zielfunktion}.
\end{proposition}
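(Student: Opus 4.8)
The plan is to combine a renewal--reward computation, identifying the long-term average reward of each threshold strategy $R(y)$ with the ratio in \eqref{equation:Zielfunktion}, with a verification argument built on a quasi-variational inequality (QVI) that rules out any admissible strategy beating the best threshold. Write $k(y):=\big(f(y)-K-\E_{y_0}[\int_0^{\tau_y}h(X_s)\,ds]\big)/\xi(y)$ and $\lambda^\ast:=k(y^\ast)=\sup_{y>y_0}k(y)$, and let $\mathcal{L}$ denote the infinitesimal generator of $X$.

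First I would compute the value of a threshold strategy $R(y)$, $y>y_0$. Under $R(y)$ the controlled process regenerates: each cycle starts at $y_0$, runs until the hitting time $\tau_y$ of level $y$, and is then restarted at $y_0$; by the strong Markov property these cycles are i.i.d.\ under $\Pro_{y_0}$. The expected cycle length is $\E_{y_0}[\tau_y]=\xi(y)<\infty$ (Remark \ref{rem:zeta}), the harvesting reward per cycle is $f(X_{\tau_y-})-K=f(y)-K$ (the diffusion is continuous, so $X_{\tau_y-}=y$), and the expected running cost per cycle $\E_{y_0}[\int_0^{\tau_y}h(X_s)\,ds]$ is finite by the linear growth of $h$ and Assumption \ref{ass:process}.1. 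The renewal--reward theorem then gives that the cumulative net reward up to $T$, divided by $T$, converges $\Pro_x$-a.s.\ to $k(y)$ for every starting point $x$ (the initial transient cycle is asymptotically negligible); after a uniform-integrability check to pass from this almost sure limit to the $\liminf$ of expectations, the value of $R(y)$ equals $k(y)$. Taking the supremum over $y$ yields the lower bound $v^h(x)\ge\sup_{y>y_0}k(y)$.

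For the matching upper bound I would exhibit a pair $(\lambda^\ast,w)$, with $w\in C^2$ away from $y^\ast$ and $C^1$ across it, solving the QVI
\[
\min\{\,\lambda^\ast+h(x)-\mathcal{L}w(x),\;\; w(x)-w(y_0)-f(x)+K\,\}=0,\qquad x\ge y_0 .
\]
On the continuation region $(0,y^\ast)$ I solve $\mathcal{L}w=h+\lambda^\ast$ explicitly in terms of the scale function $S$ and speed measure $M$ (fixing the two integration constants by the entrance behaviour at $0$ from Assumption \ref{ass:process}.1 and a normalization), and on $[y^\ast,\infty)$ I set $w(x):=w(y_0)+f(x)-K$. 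The choice $\lambda^\ast=k(y^\ast)$, together with the first-order optimality condition characterizing $y^\ast$ (the analogue for $k$ of the root equation $\tilde F_{\tilde K}=0$ of Lemma \ref{Lemma:UniqueOptimizer}), is precisely what forces value-matching and smooth fit at $y^\ast$; the structural Assumption \ref{ass:process} (monotonicity of $\mu$, hence the single sign change of $\xi''$ from Lemma \ref{xiconvexityLEM}) then yields the two global inequalities $\mathcal{L}w\le h+\lambda^\ast$ on $[y^\ast,\infty)$ and $w(x)-w(y_0)\ge f(x)-K$ on $(0,y^\ast)$. Given such $w$, I apply Dynkin's formula to $w(X^R_t)$ between jumps for an arbitrary admissible $R$, use that each impulse resets the state to $y_0$ while paying reward $f(X^R_{\tau_n-})-K\le w(X^R_{\tau_n-})-w(y_0)$ (admissibility gives $X^R_{\tau_n-}\ge y_0$), and sum over cycles to obtain
\[
\sum_{n:\tau_n\le T}\!\big(f(X^R_{\tau_n-})-K\big)-\int_0^T h(X^R_s)\,ds\;\le\; w(x)-w(X^R_T)+\lambda^\ast T+M_T ,
\]
with $M_T$ a local martingale. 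Dividing by $T$, taking expectations and $\liminf$, and showing $\tfrac1T\E_x[w(X^R_T)]\to0$ gives $v^h(x)\le\lambda^\ast$, which together with the lower bound proves the value formula and the optimality of $R(y^\ast)$.

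The main obstacle is this verification (upper-bound) step, not the renewal computation: one must control $w$ and the boundary and martingale terms uniformly over all admissible strategies, including those that harvest aggressively or for which $X^R_T$ has ill-behaved moments. This calls for a localization/truncation argument turning $M_T$ into a genuine zero-mean object and establishing $\tfrac1T\E_x[w(X^R_T)]\to0$, leaning on the linear growth of $h$, the growth of $f$, and the recurrence and finite stationary moment of Assumption \ref{ass:process}.1; verifying the two global QVI inequalities for general increasing $f$ is the other delicate point. The uniqueness claim is then immediate: the value of any threshold strategy $R(y)$ is $k(y)$, so if $y^\ast$ is the unique maximizer of $k$ then $R(y^\ast)$ is the unique optimizer among threshold strategies.
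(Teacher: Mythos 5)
Your lower bound (the renewal--reward identification of the value of $R(y)$ with $k(y)$, hence $v^h(x)\ge\sup_{y>y_0}k(y)$) is sound and matches the remark at the end of the paper's appendix, and the uniqueness claim does follow from it. The genuine gap is in your upper bound, exactly at the point you flag as ``delicate'' and then assert away. Your construction forces the continuation region to be $(0,y^\ast)$, so on $[y^\ast,\infty)$ you set $w=w(y_0)+f-K$ and must verify $\mathcal{L}w=\mathcal{L}f\le h+\lambda^\ast$ there. You claim this follows from Assumption \ref{ass:process} (monotonicity of $\mu$, single sign change of $\xi''$), but that assumption constrains only the diffusion, not the data $(f,h)$: the proposition allows \emph{any} increasing continuous $f$, so (a) $\mathcal{L}f$ need not even exist ($f$ need not be $C^2$, and smooth fit at $y^\ast$ already needs $f\in C^1$), and (b) even for smooth increasing $f$, maximality of $\lambda^\ast$ only gives the pointwise bound $f(y)-K-\E_{y_0}\left[\int_0^{\tau_y}h(X_s)\,ds\right]\le\lambda^\ast\xi(y)$, which is perfectly compatible with $\mathcal{L}f>h+\lambda^\ast$ at some points of $[y^\ast,\infty)$ (take $f$ increasing with oscillating curvature above $y^\ast$). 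In that case the stopping region of the associated stopping problem is not the half-line $[y^\ast,\infty)$, your $w$ is not a supersolution, and the Dynkin/supermartingale step collapses. (Your other inequality is fine: for $y\in[y_0,y^\ast)$ one has $w(y)-w(y_0)=\E_{y_0}\left[\int_0^{\tau_y}h(X_s)\,ds\right]+\lambda^\ast\xi(y)\ge f(y)-K$ precisely by maximality of $\lambda^\ast$.)

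The paper's proof is designed to avoid this obstruction: instead of an explicitly constructed threshold solution of an ODE, it takes $g(x)=\sup_{\tau\in\mathcal{T}_{y_0}}\E_x\left[f(X_\tau)-K-\int_0^\tau (h(X_s)+\rho^\ast)\,ds\right]$, the value function of an auxiliary optimal stopping problem with $\rho^\ast=\sup_y k(y)$ charged as a running cost. The supermartingale property of $g(X_t)-\int_0^t(h(X_s)+\rho^\ast)\,ds$ then comes from optimal stopping theory (no smoothness of $f$, no half-line stopping region required), $g\ge f-K$ holds trivially because immediate stopping is admissible, and $g(y_0)=0$, $g(y^\ast)=f(y^\ast)-K$ follow from the definition of $\rho^\ast$ together with the one-sided structure of $\mathcal{T}_{y_0}$; these facts alone feed the verification lemma. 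To salvage your route you would have to either add hypotheses on $(f,h)$ (e.g.\ the linear payoffs actually used later in the paper, for which $\mathcal{L}f(x)=\varphi\,\mu(x)\le\lambda^\ast+h(x)$ on $[y^\ast,\infty)$ can indeed be extracted from $\xi''\ge 0$ beyond $y^\ast$), or replace your explicit $w$ by the stopping value function, which is the paper's proof. A secondary point: for arbitrary admissible $R$ you demand $\frac1T\E_x[w(X^R_T)]\to0$, which is more than is needed and hard to get in this generality; the verification only requires the one-sided bound $\limsup_{T\to\infty}\frac1T\E_x[w(X^R_T)]\ge0$, which the paper obtains by coupling $X^R$ with the process reflected at $y_0$ and using monotonicity and lower-boundedness of $g$.
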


The proof of this result is presented in Appendix \ref{sec:imp_control_toolbox}.

For us the central control problem of interest is for fixed $z \in [0, \frac{\hat{y}_0-y_0}{\xi(\hat{y}_0)}]$ the problem
\[
	\liminf_{T \rightarrow \infty} \mathbb{E}_x \left[ \sum_{n: \tau_n \le T} (\gamma(X_{\tau_n-},z)-K) \right]
\]
for which we now investigate the function 
$y \mapsto \frac{\gamma(y,z)-K}{\xi(y)}$:

\begin{lemma}
\label{Lemma:MaximizerProperties}
Let $\hat{y}_0$ denote the unique maximizer of $\frac{y-y_0}{\xi(y)}$. Then for every $z \in [0, \frac{\hat{y}_0-y_0}{\xi(\hat{y}_0)}]$ there exists a unique critical point $y = y_z \in (\hat{y}_0, \infty)$ of $y\mapsto\frac{\gamma(y,z)-K}{\xi(y)}$ and this is a global maximum. Moreover, for all pairs $(y,z)$ describing a critical point as given before, it holds that
$$\frac{\partial^2}{\partial y^2} \frac{\gamma(y,z)-K}{\xi(y)} <0.$$
\end{lemma}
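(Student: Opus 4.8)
The plan is to reduce every assertion to Lemma~\ref{Lemma:UniqueOptimizer} by absorbing the price factor. By Assumption~\ref{annahmenExistenzGG} we have $\gamma(y,z)=(y-y_0)\varphi(z)$ with $\varphi(z)>0$, so for fixed $z$
$$\frac{\gamma(y,z)-K}{\xi(y)}=\varphi(z)\,\frac{(y-y_0)-K/\varphi(z)}{\xi(y)}=\varphi(z)\,k_{\tilde{K}}(y),\qquad \tilde{K}:=\frac{K}{\varphi(z)}>0,$$
where $k_{\tilde{K}}$ is exactly the function from Lemma~\ref{Lemma:UniqueOptimizer}. Since $\varphi(z)$ is a strictly positive constant in $y$, multiplying by it changes neither the location of critical points, nor whether a critical point is a maximum, nor the sign of the second derivative. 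Hence all three claims become statements about $k_{\tilde{K}}$.

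First I would read off existence, uniqueness and maximality directly: Lemma~\ref{Lemma:UniqueOptimizer} says $k_{\tilde{K}}$ has a unique maximizer $\hat{y}_{\tilde{K}}$, which is therefore its only critical point on $(y_0,\infty)$ and a global maximum. Setting $y_z:=\hat{y}_{\tilde{K}}$ delivers the required critical point. To place it in $(\hat{y}_0,\infty)$, I would use the monotonicity part of Lemma~\ref{Lemma:UniqueOptimizer}: here $\hat{y}_0$ is precisely the maximizer $\hat{y}_{\tilde{K}}$ for $\tilde{K}=0$, and since $K>0$ and $\varphi(z)>0$ force $\tilde{K}=K/\varphi(z)>0$, the strict monotonicity $\tilde{K}_1<\tilde{K}_2\Rightarrow\hat{y}_{\tilde{K}_1}<\hat{y}_{\tilde{K}_2}$ gives $y_z=\hat{y}_{\tilde{K}}>\hat{y}_0$.

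For the second-derivative claim I would differentiate $k_{\tilde{K}}(y)=(y-y_0-\tilde{K})/\xi(y)$ and use that the numerator of its first-order condition is the function $\tilde{F}_{\tilde{K}}(y)=\xi(y)-(y-y_0-\tilde{K})\xi'(y)$ introduced in the proof of Lemma~\ref{Lemma:UniqueOptimizer}, so that $k_{\tilde{K}}'(y)=\tilde{F}_{\tilde{K}}(y)/\xi(y)^2$. At the critical point $\tilde{F}_{\tilde{K}}(y_z)=0$, whence $k_{\tilde{K}}''(y_z)=\tilde{F}_{\tilde{K}}'(y_z)/\xi(y_z)^2=-(y_z-y_0-\tilde{K})\xi''(y_z)/\xi(y_z)^2$. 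Two facts then close the argument: the relation $\tilde{F}_{\tilde{K}}(y_z)=0$ together with $\xi,\xi'>0$ forces $y_z-y_0-\tilde{K}>0$; and since $\hat{y}_0\ge y_2$ (the $\tilde{K}=0$ maximizer lies in $[\max\{y_0,y_2\},\infty)$ by the proof of Lemma~\ref{Lemma:UniqueOptimizer}) we obtain $y_z>\hat{y}_0\ge y_2$, so $\xi''(y_z)>0$ by the strict convexity of $\xi$ on $(y_2,\infty)$ from Lemma~\ref{xiconvexityLEM}. Thus $k_{\tilde{K}}''(y_z)<0$, and multiplying back by $\varphi(z)>0$ yields the stated strict inequality.

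The only delicate point is securing the \emph{strict} sign of the second derivative, which requires $y_z$ to lie strictly inside the convex region $(y_2,\infty)$ rather than at the inflection point $y_2$ where $\xi''=0$. This is exactly what the strict chain $y_z>\hat{y}_0\ge y_2$ provides, so no new estimate is needed beyond the reduction above and the two cited lemmas.
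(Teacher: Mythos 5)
Your proof is correct and takes essentially the same route as the paper's: factor out $\varphi(z)$ to rewrite the objective as $\varphi(z)\,k_{\tilde{K}}(y)$ with $\tilde{K}=K/\varphi(z)$, invoke Lemma \ref{Lemma:UniqueOptimizer} (and the facts established in its proof about $\tilde{F}_{\tilde{K}}$) for existence, uniqueness and the location $y_z>\hat{y}_0$ via the monotonicity in $\tilde{K}$, and then evaluate the second derivative at the critical point using the first-order condition, with the sign coming from $\varphi(z)(y_z-y_0)-K>0$ and $\xi''(y_z)>0$. Your argument is, if anything, slightly more explicit than the paper's about why both sign conditions hold strictly (the chain $y_z>\hat{y}_0\ge y_2$), but it is the same proof.
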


\begin{proof}
	Fix $z \in [0, \frac{\hat{y}_0-y_0}{\xi(\hat{y}_0)}]$. Then we have
	\[
	\frac{\phi(z)(y-y_0)-K}{\xi(y)} = \phi(z) \cdot \frac{y -y_0 - \frac{K}{\phi(z)}}{\xi(y)}.
	\] Thus, the existence of a unique critical point satisfying the stated condition directly follows from Lemma \ref{Lemma:UniqueOptimizer} using $\tilde{K}= \frac{K}{\phi(z)}$. Furthermore, we have for any pair $(y,z)$ such that $y$ is a critical point for $z$ 
	\begin{align*}
		&\left( \frac{\partial}{\partial y} \right)^2 \frac{\gamma(y,z)-K}{\xi(y)} = \left( \frac{\partial}{\partial y} \right)^2 \frac{\varphi(z)(y-y_0)-K}{\xi(y)} \\
		&= \frac{\partial}{\partial y}\frac{\varphi(z) \xi(y)- \left( \varphi(z) (y-y_0) - K\right) \xi'(y)}{(\xi(y))^2} \\
		&= -\frac{ (\varphi(z)(y-y_0)-K) \xi''(y) + 2 \xi(y) \xi'(y) \overbrace{\left( \varphi(z) \xi(y)- (\varphi(z) (y-y_0)-K) \xi'(y)\right)}^{=0}}{\xi(y)^4} \\
		&= -\frac{ (\varphi(z)(y-y_0)-K) \xi''(y)}{\xi(y)^4} <0,
	\end{align*} which proves the rest of the claim.
\end{proof}

\begin{lemma}
	\label{Lemma:BRContinuous}
	The function 
	\begin{equation}
		\label{eq:BRMap}
	g: \left[0, \frac{\hat{y}_0-y_0}{\xi(y)}\right] \rightarrow [\hat{y}_0, \infty), \quad z \mapsto \argmax_{y \in [\hat{y}_0, \infty)} \frac{\gamma(y,z)-K}{\xi(y)}
	\end{equation} is continuous. In particular, the set 
	\begin{equation}
		\label{eq:SetOfCriticalY}
		\left\{y \in [y_0, \infty) \bigg| \exists z \in \left[0, \frac{\hat{y}_0-y_0}{\xi(y)}\right]: y \text{ is a critical point of } \frac{\gamma(y,z)-K}{\xi(y)}\right\}
	\end{equation} is compact.
\end{lemma}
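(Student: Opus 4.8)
The plan is to realise $g(z)$ as the unique solution of the first-order condition and then invoke the implicit function theorem, with the strict second-order inequality from Lemma \ref{Lemma:MaximizerProperties} supplying the non-degeneracy that the theorem requires. By Assumption \ref{annahmenExistenzGG} the objective reads
$$\frac{\gamma(y,z)-K}{\xi(y)} = \frac{(y-y_0)\varphi(z)-K}{\xi(y)},$$
which is continuously differentiable in $z$ (as $\varphi\in C^1$) and twice continuously differentiable in $y$ (as $\xi\in C^2$). Writing the numerator of its $y$-derivative as
$$N(y,z) := \varphi(z)\,\xi(y) - \bigl((y-y_0)\varphi(z)-K\bigr)\xi'(y),$$
Lemma \ref{Lemma:MaximizerProperties} guarantees that for each $z$ in the compact interval $[0,\tfrac{\hat{y}_0-y_0}{\xi(\hat{y}_0)}]$ the equation $N(y,z)=0$ has a unique root $y_z\in(\hat{y}_0,\infty)$, which is the global maximizer; hence $g(z)=y_z$.

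Next I would verify the hypothesis of the implicit function theorem. A direct computation gives
$$\frac{\partial N}{\partial y}(y,z) = -\bigl((y-y_0)\varphi(z)-K\bigr)\,\xi''(y),$$
and at a critical point (where $N=0$) this equals $\xi(y)^2$ times the second $y$-derivative of the objective, which is strictly negative by Lemma \ref{Lemma:MaximizerProperties}. Thus $\tfrac{\partial N}{\partial y}(y_z,z)\neq 0$. Since $N$ is $C^1$ in $(y,z)$, the implicit function theorem yields, for each interior $z_0$, a $C^1$ function $\tilde{g}$ on a neighbourhood of $z_0$ with $N(\tilde{g}(z),z)=0$ and $\tilde{g}(z_0)=y_{z_0}$. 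By the uniqueness of the critical point this branch $\tilde{g}$ coincides with $g$ near $z_0$, so $g$ is continuous (indeed $C^1$) there; at the two endpoints one argues identically via the one-sided form of the theorem, or by extending $\varphi$ slightly beyond the interval. This proves continuity of $g$ on the whole compact interval. (Alternatively, one could deduce continuity from Berge's maximum theorem once the optimisation is confined to a fixed compact $y$-interval, using joint continuity of the objective together with single-valuedness of the argmax; the implicit-function route is more direct.)

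For the concluding assertion, note that by Lemma \ref{Lemma:MaximizerProperties} the unique critical point of $y\mapsto\frac{\gamma(y,z)-K}{\xi(y)}$ is precisely $g(z)$, so the set in \eqref{eq:SetOfCriticalY} equals the image $g\bigl([0,\tfrac{\hat{y}_0-y_0}{\xi(\hat{y}_0)}]\bigr)$. As the continuous image of a compact interval, this image is compact, which gives the claim.

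The main obstacle I anticipate is the non-degeneracy check, but this is exactly what the strict second-order inequality in Lemma \ref{Lemma:MaximizerProperties} was prepared to deliver, so the substantive work collapses to the short computation of $\partial N/\partial y$ and the identification of the local implicit-function branch with the global maximizer through uniqueness. The only genuinely delicate point is the behaviour at the endpoints of the $z$-interval, which is handled by a one-sided implicit function theorem or a small extension of the domain of $\varphi$.
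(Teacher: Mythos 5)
Your proposal is correct and follows essentially the same route as the paper: both realise $g(z)$ as the unique root of the first-order condition $\tilde F(y,z)=\varphi(z)\xi(y)-(\varphi(z)(y-y_0)-K)\xi'(y)=0$, use the strict second-order inequality from Lemma \ref{Lemma:MaximizerProperties} as the non-degeneracy hypothesis for the implicit function theorem, identify the local branch with $g$ via uniqueness of the maximizer, and deduce compactness of the set \eqref{eq:SetOfCriticalY} as the continuous image of a compact interval. Your write-up is in fact slightly more careful than the paper's (the explicit relation $\partial_y\tilde F = \xi(y)^2\,\partial_y^2\bigl[(\gamma(y,z)-K)/\xi(y)\bigr]$ at critical points, and the endpoint treatment), but the substance is identical.
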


\begin{proof}
	We apply the implicit function theorem. By Lemma \ref{Lemma:MaximizerProperties} for any fixed $z \in [0, \frac{\hat{y}_0-y_0}{\xi(\hat{y}_0)}]$ there exists a unique maximizer which moreover is a solution to 
	\begin{equation}
		\label{eq:TildeF}
		\tilde{F}(y,z) := \phi(z) \xi(y) - (\phi(z)(y-y_0)-K) \xi'(y) =0,
	\end{equation} where the right-hand side is, as stated before, the numerator of the first order condition. Since we additionally have, by Lemma \ref{Lemma:MaximizerProperties}, that 
	\[
	\frac{\partial}{\partial y} \tilde{F}(y,z) := \frac{\partial^2}{\partial y^2} \frac{\gamma(y,z)-K}{\xi(y)}<0,	
	\] we can apply the implicit function theorem and obtain that $y= \psi(z)$ for a continuous function $\psi$ defined on an open neighbourhood $U$ of $z$.
	The second part of the claim directly follows from the continuity of $g$.
\end{proof}


\section{The Mean Field Game}\label{sec:game}

In this section we prove that Assumptions \ref{ass:process} and \ref{annahmenExistenzGG} imply the existence of a  mean field equilibrium in threshold strategies and provide a criterion ensuring the uniqueness of such an equilibrium.
By definition, a mean field equilibrium in threshold strategies satisfies
\[R({y^g})\in {\argmax}_{R \text{ \footnotesize admissible }} J(R,R(y^g)).\]

In Lemma \ref{Lemma:BRContinuous} we proved that the set \eqref{eq:SetOfCriticalY}
is compact. Let now $\underline{y}$ and $\overline{y}$ denote the minimal and maximal elements of this set. Since for any equilibrium strategy of threshold type the threshold $y$ has to be optimal given the average harvesting rate $z$, we directly obtain that only a threshold from the interval $[\underline{y}, \bar{y}]$ can constitute a mean field equilibrium.

The central idea now is that any mean field equilibrium in threshold strategies is a fixed point of the following continuous function
\begin{equation}
\label{Existence:Map}
\Phi:[\underline{y},\bar{y}] \rightarrow [\underline{y},\bar{y}]; \ 	y \mapsto \argmax_{\tilde{y} \in [\underline{y},\bar{y}]} \frac{\gamma \left(\tilde{y}, \frac{y-y_0}{\xi(y)}\right) - K}{\xi(\tilde{y})}
\end{equation} and any fixed point of this map is a mean field equilibrium in threshold strategies.
Indeed, let $y$ be a threshold such that $R(y)$ is an equilibrium. By Proposition \ref{ControlToolboxTheorem}, Lemma \ref{Lemma:MaximizerProperties} and \ref{Lemma:BRContinuous} the threshold $y$ maximizes $$\tilde{y}\mapsto\frac{\gamma(\tilde{y}, \frac{y-y_0}{\xi(y)}) - K}{\xi(\tilde{y})}$$ and satisfies $\underline{y} < y<\bar{y}$.  If, on the other hand, $y \in [y_0,\bar{y}]$ is a fixed point of \eqref{Existence:Map}, then $y$ maximizes $$\tilde{y} \mapsto \frac{\gamma(\tilde{y}, \frac{y-y_0}{\xi(y)}) - K}{\xi(\tilde{y})},$$ which by Proposition \ref{ControlToolboxTheorem} and Lemma \ref{Lemma:MaximizerProperties} means that $R(y)$ is the unique optimal threshold strategy for $z= \frac{y-y_0}{\xi(y)}$, which is the defining property of a mean field equilibrium.

\begin{thm}\label{haupsatzproblem}
	Let Assumptions \ref{ass:process} and  \ref{annahmenExistenzGG} hold. Then,
	there is a fixed point of $\Phi$, which means that a mean field equilibrium in threshold strategies exists.
\end{thm}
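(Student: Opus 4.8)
The plan is to realize the claimed mean field equilibrium as a fixed point of the self-map $\Phi$ defined in \eqref{Existence:Map} and to obtain such a fixed point by Brouwer's theorem; since the domain is one-dimensional, one may alternatively apply the intermediate value theorem to $y \mapsto \Phi(y) - y$. The domain $[\underline{y},\bar{y}]$ is a nonempty compact interval, hence compact and convex, so the only two things that require checking are that $\Phi$ is a well-defined self-map and that it is continuous. The bulk of the needed analytic input has already been assembled in the preliminary lemmas, so the proof will essentially be a verification that their hypotheses combine to satisfy those of a fixed-point theorem.

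First I would verify that $\Phi$ maps $[\underline{y},\bar{y}]$ into itself. For $y\in[\underline{y},\bar{y}]$ set $z=\frac{y-y_0}{\xi(y)}$. Since $\hat{y}_0$ maximizes $y\mapsto\frac{y-y_0}{\xi(y)}$ (Lemma \ref{Lemma:UniqueOptimizer} with $\tilde{K}=0$) and this quantity is nonnegative on $[y_0,\infty)$, we get $z\in[0,\frac{\hat{y}_0-y_0}{\xi(\hat{y}_0)}]$, i.e. $z$ lies in the admissible range on which $\varphi$ and the best-response map are defined. By Lemma \ref{Lemma:MaximizerProperties} the maximizer of $\tilde{y}\mapsto\frac{\gamma(\tilde{y},z)-K}{\xi(\tilde{y})}$ over $[\hat{y}_0,\infty)$ is the unique critical point $y_z$, which by construction belongs to the set \eqref{eq:SetOfCriticalY}; as $\underline{y}$ and $\bar{y}$ are its minimal and maximal elements, $y_z\in[\underline{y},\bar{y}]$. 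Moreover all critical points lie in $(\hat{y}_0,\infty)$, so $[\underline{y},\bar{y}]\subseteq(\hat{y}_0,\infty)$ and the \emph{global} maximizer over $[\hat{y}_0,\infty)$ already sits inside $[\underline{y},\bar{y}]$; hence the $\argmax$ over the subinterval $[\underline{y},\bar{y}]$ coincides with it. In other words $\Phi(y)=g\!\left(\frac{y-y_0}{\xi(y)}\right)\in[\underline{y},\bar{y}]$, with $g$ the best-response map of Lemma \ref{Lemma:BRContinuous}. This shows simultaneously that $\Phi$ is well defined and self-mapping.

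Next I would establish continuity. The identification $\Phi=g\circ r$ with $r(y)=\frac{y-y_0}{\xi(y)}$ reduces this to continuity of the two factors: $r$ is continuous because $\xi\in C^2([y_0,\infty))$ with $\xi>0$ on $(y_0,\infty)$, while $g$ is continuous by Lemma \ref{Lemma:BRContinuous}. Thus $\Phi$ is continuous on the compact convex interval $[\underline{y},\bar{y}]$, and Brouwer's theorem yields a fixed point $y^g=\Phi(y^g)$. By the equivalence established immediately before the theorem statement, any fixed point of $\Phi$ is a mean field equilibrium in threshold strategies, which is the assertion.

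The main obstacle is the bookkeeping in the second paragraph, namely the reduction of the constrained $\argmax$ defining $\Phi$ to the unconstrained best-response map $g$: one must check both that the harvesting rate $r(y)$ never leaves the interval on which $\varphi$ (hence $g$) is defined, and that the unique global maximizer always lands inside $[\underline{y},\bar{y}]$, so that restricting the $\argmax$ to this subinterval is harmless. Once this identification is secured, the self-map and continuity properties follow directly from the preliminary lemmas and the fixed-point step is routine.
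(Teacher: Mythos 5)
Your proposal is correct and follows essentially the same route as the paper: the decomposition $\Phi = g \circ f$ with $f(y)=\frac{y-y_0}{\xi(y)}$, continuity of both factors via Lemma \ref{Lemma:BRContinuous} and the regularity of $\xi$, and Brouwer's fixed point theorem. Your second paragraph additionally verifies that the global best response over $[\hat{y}_0,\infty)$ indeed lands in $[\underline{y},\bar{y}]$ (so that the restricted $\argmax$ defining $\Phi$ coincides with $g\circ f$), a point the paper's proof leaves implicit in the definition of $\underline{y}$ and $\bar{y}$.
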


\begin{proof}
	We note that $\Phi=g \circ f,$ where $g$ is as in \eqref{eq:BRMap} and $$f:[\underline{y},\bar{y}]\rightarrow \left[0,\frac{\hat{y}_0-y_0}{\xi(y)}\right]; \ y \mapsto \frac{y-y_0}{\xi(y)}.$$
	The function $f$ is continuous since $\xi$ is continuous and $g$ is continuous by Lemma \ref{Lemma:BRContinuous}. Therefore, the function $\Phi$ itself is continuous and Brouwer's fixed point theorem yields the existence of a fixed point.
\end{proof}

A relevant question in game theory is under which conditions an equilibrium is unique. 
As in most situations, also here the uniqueness result relies on monotonicity of the map that characterizes the equilibria. 

\begin{thm}\label{thm:unique eq}
	There is a unique mean field equilibrium in threshold strategies.
\end{thm}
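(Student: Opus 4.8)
The plan is to exploit the factorization $\Phi = g \circ f$ recorded in the proof of Theorem \ref{haupsatzproblem}, where $f(y) = \frac{y-y_0}{\xi(y)}$ and $g$ is the best-response map from \eqref{eq:BRMap}. Since the fixed points of $\Phi$ are exactly the mean field equilibria in threshold strategies and existence is already guaranteed by Theorem \ref{haupsatzproblem}, it suffices to show that $\Phi$ is \emph{strictly decreasing}: a strictly monotone self-map of an interval can fix at most one point. Concretely, if $\Phi$ is strictly decreasing, then $y \mapsto \Phi(y) - y$ is strictly decreasing, hence injective, so it vanishes at most once; combined with existence this yields exactly one fixed point. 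I would therefore analyse the monotonicity of the two factors $f$ and $g$ separately.

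First I would treat $f$. By Lemma \ref{Lemma:MaximizerProperties} every element of the critical set \eqref{eq:SetOfCriticalY} lies in $(\hat{y}_0, \infty)$; since this set is compact, its minimal element $\underline{y}$ is itself an element of the set and hence satisfies $\underline{y} > \hat{y}_0$. Applying Lemma \ref{Lemma:UniqueOptimizer} with $\tilde{K} = 0$, so that $k_0 = f$ with unique maximizer $\hat{y}_0$, we get that $f$ is strictly decreasing on $(\hat{y}_0, \infty)$, and therefore strictly decreasing on the entire domain $[\underline{y}, \bar{y}] \subset (\hat{y}_0, \infty)$ of $\Phi$.

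Next I would treat $g$. Rewriting as in the proof of Lemma \ref{Lemma:MaximizerProperties}, one has $\frac{\gamma(\tilde{y}, z)-K}{\xi(\tilde{y})} = \varphi(z) \cdot \frac{\tilde{y} - y_0 - K/\varphi(z)}{\xi(\tilde{y})}$, so the maximizer $g(z)$ coincides with the point $\hat{y}_{\tilde{K}}$ of Lemma \ref{Lemma:UniqueOptimizer} evaluated at $\tilde{K} = K/\varphi(z)$. Because $\varphi$ is strictly decreasing and $K > 0$, the map $z \mapsto K/\varphi(z)$ is strictly increasing, while Lemma \ref{Lemma:UniqueOptimizer} asserts that $\tilde{K} \mapsto \hat{y}_{\tilde{K}}$ is strictly increasing. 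Composing these, $g$ is strictly increasing. Consequently, for $y_1 < y_2$ we obtain $f(y_1) > f(y_2)$ and then $g(f(y_1)) > g(f(y_2))$, i.e. $\Phi(y_1) > \Phi(y_2)$, so $\Phi$ is strictly decreasing, and uniqueness follows as described above.

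The substantive steps are the two monotonicity claims: the strict monotonicity of $g$, which relies on transporting the $z$-dependence into the parameter $\tilde{K}$ through $\varphi$ and then invoking the comparison statement $\hat{y}_{\tilde{K}_1} < \hat{y}_{\tilde{K}_2}$ of Lemma \ref{Lemma:UniqueOptimizer}, and the verification that $\underline{y} > \hat{y}_0$, which is what upgrades $f$ from weakly to strictly decreasing across the whole domain. I expect the latter point to be the one most easily overlooked, since without it the composition could be merely non-increasing and the ``at most one zero'' argument would fail; once both are in place, the conclusion is immediate.
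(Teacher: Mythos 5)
Your proposal is correct and matches the paper's own argument essentially step for step: the paper also proves uniqueness by showing $\Phi = g \circ f$ is strictly decreasing, using Lemma \ref{Lemma:UniqueOptimizer} (via the rewriting with $\tilde{K} = K/\varphi(z)$) for the monotonicity of $g$, and Lemma \ref{Lemma:MaximizerProperties} together with $\underline{y} > \hat{y}_0$ and the strict decrease of $\frac{y-y_0}{\xi(y)}$ beyond $\hat{y}_0$ for the monotonicity of $f$. Your additional observation that $y \mapsto \Phi(y) - y$ is strictly decreasing is a clean way to make the ``at most one fixed point'' step explicit, but it is the same proof.
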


\begin{proof}
	In the setting of the proof of Theorem \ref{haupsatzproblem} we have seen that the map $\Phi$ is a composition of two functions $f$ and $g$.
	The optimization problem considered in the function $g$ is to maximize $\frac{\phi(z)(y-y_0)-K}{\xi(y)}$, which is equivalent to maximizing $\frac{y-y_0-K/\phi(z)}{\xi(y)}$. Since $K/\phi(z)$ is strictly increasing in $z$ we obtain by Lemma \ref{Lemma:UniqueOptimizer} that the function $g$, that maps $z$ to the unique optimizer, is increasing. Moreover, the function $f$ is decreasing since by Lemma \ref{Lemma:MaximizerProperties} we have $\underline{y}>\hat{y}_0$ and by Lemma \ref{Lemma:UniqueOptimizer} the function $\frac{y-y_0}{\xi(y)}$ is strictly decreasing for $y \ge \hat{y}_0$. Thus, in total the function $\Phi$ is strictly decreasing, which yields in combination with Theorem \ref{haupsatzproblem} that a unique fixed point exists.
\end{proof}

\section{The Mean Field Control Problem}\label{sec:problem}

We now consider the situation that the market participants cooperate in the sense that they agree to choose a common strategy. 
The mean field type control problem therefore consists of maximizing
\begin{equation}\label{eq:mean_field_problem}
J_x(R,R)= \liminf_{T\rightarrow\infty} \frac{1}{T}\E_x\left[\sum_{n:\tau_n\leq T} \left(\gamma \left(X_{\tau_n-}^R,\frac{\mathbb{E}[\hat{X}_\tau^R]-y_0}{\mathbb{E}[\tau]}\right)-K\right)\right]\end{equation}
over all admissible stationary strategies $R$ with corresponding stopping times $\tau$.
In the class of threshold strategies $R(y)$,  $y\geq y_0$, the optimization problem is easily solved. In this case, we explicitly know
$\frac{\mathbb{E}[\hat{X}_\tau^R]-y_0}{\mathbb{E}[\tau]} = \frac{y-y_0}{\xi(y)},$ hence, we just have to maximize the explicitly given real function
\[v(y):=J_x(R(y),R(y))= \frac{\gamma \left(y,\frac{y-y_0}{\xi(y)} \right)-K}{\xi(y)}.\]
However, the problem \eqref{eq:mean_field_problem} is a non-standard stochastic control problem due to the expectation-terms. Therefore, it is by no means clear that threshold strategies are indeed optimal in the class of all admissible stationary strategies. In the following theorem, we prove this fact by splitting up the problem as follows
\begin{equation}
	\label{Problem:AlternativeForm}
	\sup_R J_x(R,R)=\sup_{z\in[0,\frac{\hat{y}_0-y_0}{\xi(y)}]}\sup_{R\mbox{ \footnotesize with } \frac{\mathbb{E}[X_\tau^R]-y_0}{\mathbb{E}[\tau]}=z}\liminf_{T\rightarrow\infty} \frac{1}{T}\E_x\left[\sum_{n:\tau_n\leq T} (\gamma(X_{\tau_n-}^R,z) - K)\right]
\end{equation}
and then utilizing a Lagrange-type approach to reduce the restricted problem to a standard problem. 

%
%
%


\begin{thm}
\label{Theorem:ControlProblem}
Under Assumptions \ref{ass:process} and \ref{annahmenExistenzGG}
the value of the optimization problem \eqref{eq:mean_field_problem} is 
\[\sup_yH(y),\;\;H(y)=\frac{\gamma \left(y,\frac{y-y_0}{\xi(y)}\right)-K}{\xi(y)}\]
and if $y^*$ is a maximizer of $H$, then the threshold strategy $R=R(y^\ast)$ is optimal in the class of all stationary strategies. Moreover, any such threshold value $y^\ast$ lies in $[\hat{y}_0,\infty)$.
\end{thm}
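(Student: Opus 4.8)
The plan is to establish both inequalities in $\sup_R J_x(R,R)=\sup_y H(y)$, working from the split \eqref{Problem:AlternativeForm}. The inequality ``$\ge$'' is immediate: threshold strategies $R(y)$ are admissible stationary strategies with harvesting rate $\frac{y-y_0}{\xi(y)}$, so $J_x(R(y),R(y))=H(y)$ and hence $\sup_R J_x(R,R)\ge\sup_y H(y)$. For ``$\le$'' I first reduce, by the renewal--reward theorem, the long-term average of any admissible stationary strategy $R$ (generating stopping time $\tau$, process restarted at $y_0$) to a per-cycle ratio, so that for fixed $z$ both objective and constraint read
\[
J_x(R,z)=\frac{\E_{y_0}[\gamma(X_{\tau-},z)-K]}{\E_{y_0}[\tau]},\qquad \frac{\E_{y_0}[X_{\tau-}-y_0]}{\E_{y_0}[\tau]}=z .
\]
With $z$ frozen, $\gamma(\cdot,z)=\varphi(z)(\cdot-y_0)$ is a fixed increasing payoff vanishing at $y_0$, so the inner supremum in \eqref{Problem:AlternativeForm} is a standard impulse control problem of the type in Proposition \ref{ControlToolboxTheorem}, but subject to the affine constraint ``rate $=z$''.

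Next I remove the constraint by a Lagrange multiplier. For $\lambda<\varphi(z)$ set $f_\lambda(y):=(\varphi(z)-\lambda)(y-y_0)$, an admissible increasing payoff with $f_\lambda(y_0)=0$, and let $v^{\mathrm{rel}}_\lambda:=\sup_{R'}\frac{\E_{y_0}[f_\lambda(X_{\tau'-})-K]}{\E_{y_0}[\tau']}$ denote the value of the associated \emph{unconstrained} problem; by Proposition \ref{ControlToolboxTheorem} this equals $\sup_{y>y_0}\frac{f_\lambda(y)-K}{\xi(y)}$, and dividing by $\varphi(z)-\lambda>0$ lets Lemma \ref{Lemma:UniqueOptimizer} with $\tilde K=K/(\varphi(z)-\lambda)$ identify the maximizing threshold $\hat y_{\tilde K}\ge\hat y_0$. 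Rewriting a feasible $R$ as objective minus $\lambda$ times the vanishing constraint defect yields the weak-duality bound
\[
J_x(R,z)=\frac{\E_{y_0}[f_\lambda(X_{\tau-})-K]}{\E_{y_0}[\tau]}+\lambda z\le v^{\mathrm{rel}}_\lambda+\lambda z,
\]
valid for every $\lambda<\varphi(z)$ and every feasible $R$.

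The decisive step is to close the duality gap. Given $z\in(0,\frac{\hat y_0-y_0}{\xi(\hat y_0)})$, let $y^\ast(z)\ge\hat y_0$ be the unique threshold with $\frac{y^\ast(z)-y_0}{\xi(y^\ast(z))}=z$ (it exists since $y\mapsto\frac{y-y_0}{\xi(y)}$ is strictly decreasing on $[\hat y_0,\infty)$ from its maximum to $0$ by Lemma \ref{Lemma:UniqueOptimizer}). Because $\hat y_{\tilde K}$ runs continuously and strictly increasingly from $\hat y_0$ to $\infty$ as $\tilde K$ runs over $(0,\infty)$, I can choose $\tilde K$, equivalently $\lambda^\ast(z)=\varphi(z)-K/\tilde K<\varphi(z)$, so that $\hat y_{\tilde K}=y^\ast(z)$. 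Then $R(y^\ast(z))$ is feasible \emph{and} maximizes the relaxed problem, so the weak-duality inequality holds with equality by complementary slackness; hence the inner supremum equals $H(y^\ast(z))$ and is attained at $R(y^\ast(z))$. I expect this matching of the multiplier to the constraint to be the main obstacle: it is precisely what turns the a priori constrained, non-standard problem into the solvable threshold problem, and it rests squarely on the monotonicity of $\hat y_{\tilde K}$ in $\tilde K$ from Lemma \ref{Lemma:UniqueOptimizer}.

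Finally I take the supremum over $z$. As $z$ ranges over $[0,\frac{\hat y_0-y_0}{\xi(\hat y_0)}]$ the matched threshold $y^\ast(z)$ ranges over all of $[\hat y_0,\infty)$, whence $\sup_R J_x(R,R)=\sup_z H(y^\ast(z))=\sup_{y\ge\hat y_0}H(y)$. To upgrade this to $\sup_y H(y)$ and locate the maximizer, I use the identity $H(y)=\varphi(z(y))\,z(y)-\frac{K}{\xi(y)}$ with $z(y)=\frac{y-y_0}{\xi(y)}$: if $y<\hat y_0$, the point $y'>\hat y_0$ with the same rate $z(y')=z(y)=:z$ satisfies $\xi(y')>\xi(y)$ (as $\xi$ is strictly increasing), so $H(y')=\varphi(z)z-\frac{K}{\xi(y')}>H(y)$. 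Thus every $y<\hat y_0$ is strictly dominated, $\sup_{y>y_0}H(y)=\sup_{y\ge\hat y_0}H(y)$, and every maximizer of $H$ lies in $[\hat y_0,\infty)$. Combining this with the easy ``$\ge$'' direction and the optimality of $R(y^\ast)$ established above completes the proof.
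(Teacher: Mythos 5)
Your proposal is correct and follows essentially the same route as the paper: the decomposition over fixed harvesting rates $z$, the Lagrangian relaxation with multiplier $\lambda<\varphi(z)$ reducing to the standard problem with effective cost $\tilde K=K/(\varphi(z)-\lambda)$ via Proposition \ref{ControlToolboxTheorem} and Lemma \ref{Lemma:UniqueOptimizer}, the matching of the multiplier to the constraint by continuity and monotonicity of the optimal threshold, and the domination argument placing every maximizer in $[\hat{y}_0,\infty)$. The only differences are cosmetic: you parametrize the matching by $\tilde K$ rather than $\lambda$ and phrase the sufficiency step as weak duality plus complementary slackness, whereas the paper states directly that a feasible unconstrained maximizer solves the constrained problem.
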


\begin{proof}
	By the previous discussion, it is enough to consider, for fixed $z\in\left[0, \frac{\hat{y}_0-y_0}{\xi(\hat{y}_0)}\right]$, the restricted problem
	\begin{equation}\label{eq:restr_problem}
	\sup_{R\mbox{ with } \frac{\mathbb{E}[X_\tau^R]-y_0}{\xi(y)} =z}\liminf_{T\rightarrow\infty} \frac{1}{T}\E_x\left[\sum_{n:\tau_n\leq T}(\gamma(X_{\tau_n-}^R,z)-K)\right]
	\end{equation}
	and prove that a threshold strategy is optimal. 
	
	Following a standard Lagrange approach, we consider, for fixed $\lambda < \phi(z)$, the associated unconstrained  problem
		\begin{equation}\label{eq:lagrange}
	\sup_{R}\liminf_{T\rightarrow\infty} \frac{1}{T}\E_x\left[\sum_{n:\tau_n\leq T}(\gamma(X_{\tau_n-}^R,z)-K)\right]-\lambda\left(\frac{\mathbb{E}[\hat{X}_{\tau-}^R]-y_0}{\mathbb{E}[\tau]} -z\right).
	\end{equation}
	Writing $N(t) = \max \{n: \tau_n \le t\}$ we obtain by standard renewal results that	
	\begin{align*} \frac{1}{T} \sum_{n : \tau_n \le T} \left( X_{\tau_n-}^R- y_0 \right)
	&= \frac{1}{T} \sum_{n=1}^{N(T)} \left( X_{\tau_n-}^R- y_0 \right) = \frac{N(T)}{T} \cdot \frac{1}{N(T)} \sum_{n=1}^{N(T)} \left( X_{\tau_n-}^R- y_0 \right) \\
	&\rightarrow\frac{1}{\mathbb{E}[\tau]} \cdot \left( \mathbb{E}[X_{\tau-}] -y_0\right) = \frac{1}{\mathbb{E}[\tau]} \cdot \left( \mathbb{E}[\hat{X}_{\tau-}] -y_0\right) 
	 \end{align*}
	a.s. and in $L^1$, hence problem \eqref{eq:lagrange} may be rewritten as
		\begin{equation*}
\sup_{R}\liminf_{T\rightarrow\infty} \frac{1}{T}\E_x\left[\sum_{n:\tau_n\leq T}\left( (\phi(z)-\lambda)  (X_{\tau_n-}-y_0) - K\right)\right] + \lambda z.
\end{equation*}	
As $\lambda z$ is just a constant and 
\[
	\frac{(\phi(z)-\lambda)(y-y_0)-K}{\xi(y)} = (\phi(z) - \lambda)\frac{y-y_0- \tilde{K}}{\xi(y)}
\] with $\tilde{K} = K/(\phi(z)- \lambda)$ we obtain by Lemma \ref{Lemma:UniqueOptimizer}, and Proposition \ref{ControlToolboxTheorem} that there is a  threshold $y=y(\lambda, z) \ge \hat{y}_0$ such that $R(y)$ is the unique optimal threshold strategy. Moreover, by Lemma \ref{Lemma:UniqueOptimizer} the optimizer is increasing in $\lambda$ and satisfies $y(\lambda, z) \ge y_0 + \frac{K}{\phi(z)-\lambda}$. Thus, we obtain 
\[
\lim_{\lambda \rightarrow \phi(z)} y(\lambda,z) = \infty \quad \text{and} \quad \lim_{y \rightarrow -\infty} y(\lambda,z) = \hat{y}_0.
\] 
Moreover, we can show as in Lemma \ref{Lemma:BRContinuous} that the map $\lambda \mapsto y(\lambda,z)$ is continuous. Therefore, for any $z \in (0,\hat{y}_0)$, there is a $\lambda_z$ such that 
\[
	z = \frac{y(\lambda_z,z)-y_0}{\xi(y(\lambda_z,z))}.
\]
Since $R(y(\lambda_z,z))$ is an (unconstrained) maximizer for 
\[
\sup_{R} \liminf_{T\rightarrow\infty} \frac{1}{T} \mathbb{E}_x \left[ \sum_{n: \tau_n \le T} \gamma(X_{\tau_n-},z) -K \right] - \lambda_z \left( \frac{\mathbb{E}\left[X_{\tau-}\right] -y_0}{\mathbb{E}[\tau]}-z \right)
\]
and fulfils $ z= \frac{y(\lambda_z,z) -y_0}{\xi(y(\lambda_z,z))} $ it is a maximizer for \eqref{eq:restr_problem} as well.
We conclude by noting that any threshold startegy $R(y)$ with threshold $y<\hat{y}_0$ is not optimal for \eqref{eq:restr_problem} since $\phi(z)(y-y_0)< \phi(z)(y(\lambda_z,z)-y_0)$.
\end{proof}

\section{Computation and Comparison of Solutions}\label{sec:comparison}

We have shown that, both for the mean field game and for the mean field equilibrium, there is a solution being a threshold strategy. In both cases the threshold can be obtained by maximizing the function $$G:(y_0,\infty)\times \R_+;\ \ (x_1,x_2)\mapsto \frac{\gamma \left(x_1,\frac{x_2-y_0}{\xi(x_2)}\right)-K}{\xi(x_1)}$$ in a certain way. 
We remark that $G$ is differentiable, since $x_2\mapsto \frac{x_2-y_0}{\xi(x_2)}$ is differentiable due to \eqref{DensityOfStationaryProcess}.

The optimal thresholds $y^p$ for the cooperative mean field type control problem are given as the solutions of $$0=\frac{\partial}{\partial y}G(y,y)=\frac{\partial}{\partial x_1}G(y,y)+\frac{\partial}{\partial x_2}G(y,y)$$ and the threshold $y^g$ constituting an equilibrium for the competitive mean field game is given as  the solution of $$0=\frac{\partial}{\partial x_1}G(y,y).$$ 

Our assumptions, in particular the assumption that $\gamma(x,z)$ is strictly decreasing in $z$ for all $x$,  yield the following comparison result stating that the threshold under competition is smaller than in the cooperative regime. Therefore, in our (oversimplified) model, competition leads to a smaller average volume of wood in the forest stands.
\begin{thm}
	Let $y^p$ be an optimal threshold for the mean field type control problem and $y^g$ the unique threshold describing a mean field equilibrium. Then $y^p \ge y^g$.
\end{thm}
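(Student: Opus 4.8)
The plan is to compare the first-order conditions defining $y^g$ and $y^p$ by reading both through the fixed-point map $\Phi = g\circ f$ of Theorems \ref{haupsatzproblem} and \ref{thm:unique eq}. Recall that $y^g$ is characterized by $\partial_{x_1}G(y^g,y^g)=0$ and is the unique fixed point of the strictly decreasing map $\Phi$, while $y^p$ satisfies $\partial_{x_1}G(y^p,y^p)+\partial_{x_2}G(y^p,y^p)=0$. I would show that the cross term $\partial_{x_2}G(y^p,y^p)$ is strictly positive, which forces $\partial_{x_1}G(y^p,y^p)<0$, and then translate this sign into the inequality $\Phi(y^p)<y^p$. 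Since $y\mapsto\Phi(y)-y$ is then strictly decreasing and vanishes only at $y^g$, the relation $\Phi(y^p)-y^p<0=\Phi(y^g)-y^g$ yields $y^p>y^g$.

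First I would compute the cross-derivative. Writing $z(x_2)=\frac{x_2-y_0}{\xi(x_2)}$ and using $\gamma(x_1,z)=(x_1-y_0)\varphi(z)$ from Assumption \ref{annahmenExistenzGG},
\[
\partial_{x_2}G(x_1,x_2)=\frac{(x_1-y_0)\,\varphi'(z(x_2))\,z'(x_2)}{\xi(x_1)}.
\]
Here $x_1-y_0>0$ and $\xi(x_1)>0$, $\varphi'<0$ by Assumption \ref{annahmenExistenzGG}, and $z'(x_2)<0$ for $x_2>\hat{y}_0$ because $\frac{y-y_0}{\xi(y)}$ is strictly decreasing for $y>\hat{y}_0$ by Lemma \ref{Lemma:UniqueOptimizer}. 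The three sign contributions combine to give $\partial_{x_2}G(y,y)>0$ for every $y>\hat{y}_0$. It then remains to verify $y^p>\hat{y}_0$ strictly: Theorem \ref{Theorem:ControlProblem} gives $y^p\in[\hat{y}_0,\infty)$, and if $y^p=\hat{y}_0$ then $z'(\hat{y}_0)=0$ forces $\partial_{x_2}G(y^p,y^p)=0$, so the first-order condition for $y^p$ collapses to $\partial_{x_1}G(\hat{y}_0,\hat{y}_0)=0$, i.e. $\hat{y}_0=g(z(\hat{y}_0))$; but Lemma \ref{Lemma:MaximizerProperties} places $g(\cdot)$ strictly inside $(\hat{y}_0,\infty)$, a contradiction. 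Hence $y^p>\hat{y}_0$, so $\partial_{x_2}G(y^p,y^p)>0$ and the optimality condition gives $\partial_{x_1}G(y^p,y^p)=-\partial_{x_2}G(y^p,y^p)<0$.

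Next I would convert this sign into a statement about $\Phi$. For fixed parameter $z$, the function $x_1\mapsto\frac{\gamma(x_1,z)-K}{\xi(x_1)}$ is strictly unimodal with unique maximizer $g(z)$: by the sign analysis of $\tilde{F}_{\tilde{K}}$ in the proof of Lemma \ref{Lemma:UniqueOptimizer}, its $x_1$-derivative is strictly positive for $x_1<g(z)$ and strictly negative for $x_1>g(z)$. Evaluating at $z=f(y^p)$, the inequality $\partial_{x_1}G(y^p,y^p)<0$ is therefore equivalent to $y^p>g(f(y^p))=\Phi(y^p)$; note that $f(y^p)\in\bigl(0,\frac{\hat{y}_0-y_0}{\xi(\hat{y}_0)}\bigr)$ since $y^p>\hat{y}_0$, so $\Phi(y^p)$ is well defined.

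Finally, I would invoke the monotonicity from Theorem \ref{thm:unique eq}: $\Phi=g\circ f$ is strictly decreasing, $g$ being increasing and $f$ decreasing, so $y\mapsto\Phi(y)-y$ is strictly decreasing with $y^g$ as its unique zero. From $\Phi(y^p)-y^p<0=\Phi(y^g)-y^g$ I conclude $y^p>y^g$, which gives the claimed $y^p\ge y^g$. The main obstacle is the middle portion: one must argue carefully that $y^p>\hat{y}_0$ strictly, so that the cross-derivative is genuinely positive rather than zero, and that the single-crossing property of the inner problem lets the sign of $\partial_{x_1}G(y^p,y^p)$ be read as the comparison between $y^p$ and $\Phi(y^p)$. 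Everything else is sign bookkeeping together with the already-established strict monotonicity and unique fixed point of $\Phi$.
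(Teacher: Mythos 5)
Your proof is correct -- it even yields the strict inequality $y^p > y^g$ -- but it takes a genuinely different route from the paper. The paper argues by pure revealed preference, with no calculus at all: assume $y^g > y^p$; since both thresholds exceed $\hat{y}_0$ (Lemma \ref{Lemma:UniqueOptimizer} and Theorem \ref{Theorem:ControlProblem}) and $y \mapsto \frac{y-y_0}{\xi(y)}$ is strictly decreasing past $\hat{y}_0$, the strict monotonicity of $\varphi$ gives $G(x,y^g) > G(x,y^p)$ for all $x > y_0$; combining this with the equilibrium inequality $G(y^p,y^g) \le G(y^g,y^g)$ and the planner's optimality $G(y^g,y^g) \le G(y^p,y^p)$ produces the contradiction $G(y^p,y^p) < G(y^p,y^g) \le G(y^g,y^g) \le G(y^p,y^p)$. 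That argument needs neither differentiability of $G$, nor interiority of $y^p$, nor the monotonicity or uniqueness of the fixed point of $\Phi$, and it applies verbatim to any equilibrium threshold. Your first-order-condition route buys more: the strict inequality, and a transparent economic mechanism -- the positive externality term $\partial_{x_2}G(y^p,y^p)>0$ tilts the planner's marginal condition so that $\partial_{x_1}G(y^p,y^p)<0$, i.e.\ $\Phi(y^p)<y^p$ -- but at the cost of invoking Lemma \ref{Lemma:MaximizerProperties}, the single-crossing property hidden in the proof of Lemma \ref{Lemma:UniqueOptimizer}, and the strict monotonicity of $\Phi$ from Theorem \ref{thm:unique eq}. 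One small point to tighten: you deduce $z'(x_2)<0$ for $x_2>\hat{y}_0$ from the strict decrease of $\frac{y-y_0}{\xi(y)}$, but strict decrease alone does not exclude isolated zeros of the derivative; instead cite the sign analysis in the proof of Lemma \ref{Lemma:UniqueOptimizer}, where the numerator $\tilde{F}_0$ of that derivative is shown to be strictly negative beyond its unique root $\hat{y}_0$. With that reference, your sign bookkeeping is complete and the argument stands.
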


\begin{proof}
	Assume that there is a threshold for the mean field type control problem $y^p$ such that $y^g > y^p$. 
	Since by Lemma \ref{Lemma:UniqueOptimizer} and Theorem \ref{Theorem:ControlProblem}  we have $y^g,y^p > \hat{y}_0$ Lemma \ref{Lemma:UniqueOptimizer} yields that $\frac{y^g -y_0}{\xi(y^g)} < \frac{y^p-y_0}{\xi(y^p)}$.
	As $\gamma(x,z)$ is strictly decreasing in $z$, it holds that  $G(x,y^g)>G(x,y^p)$ for all $x >y_0$.		
	Since $y^g$ is an equilibrium we obtain that $$G(y^p,y^g) \le G(y^g,y^g).$$ Since $y^p$ is a solution of the mean field type control problem we have $$G(y^g,y^g) \le G(y^p,y^p).$$ All in all we obtain $$G(y^p,y^p)< G(y^p,y^g) \le G(y^g,y^g) \le G(y^p,y^p),$$ which is a contradiction.
\end{proof}

\section{An alternative market state dependence} \label{sec:EW}

In this section we investigate a different interaction mechanism. More precisely, we assume that the prices do not depend on the average harvesting rate, but on the expected wood supply. The existence of mean field equilibria and optimal mean field type control can be established using similar methods. However, we no longer obtain that equilibria in threshold strategies are unique and the comparison result now holds with interchanged roles.
We do not completely derive the results here, but instead we highlight, at which points differences in the results or proofs occur.

In this section we assume that the wood price at time $t$ depends on the volume of wood available at that time, which is described by the mean $\mathbb{E}[\hat{X}_t^Q]$ depending on the initial distribution of $\hat{X}_0^Q$. 
As we consider a long-term reward structure, it is reasonable to assume that the process $\hat{X}^Q$ has an invariant distribution $\Pi^Q$ and $\hat{X}^Q$ is started from this. We call strategies satisfying this assumption \textit{invariant admissible strategies}.
In this setting, $\mathbb{E}[\hat{X}_t^Q] = \int x \Pi^Q(dx)=:\mathbb{E}[\hat{X}_\infty^Q]$ is independent of $t$. 
Therefore, the new reward functional for each pair of admissible strategies $R, \ \Qs$, with $\Qs$ invariant, is given by 
\begin{align}\label{eq:J_x2}
J_x(R,\Qs):= \liminf_{T\rightarrow\infty} \frac{1}{T}\E_x\left(\sum_{n:\tau_n\leq T}(\gamma(X_{\tau_n-}^R,\mathbb{E}[\hat{X}^\Qs_\infty])-K)\right),
\end{align}
where $\gamma(x,z)$ is now the payoff function that models the reward the decision maker gets each time harvesting, which we assume here to depend on the average amount of wood $\mathbb{E}[\hat{X}_\infty^\Qs]$ that the other market participant have in their forest.

Except for the change in the reward functional the formulation of the mean field game and the mean field type control problem are completely analogous and also the standing assumptions \ref{ass:process} are unchanged. Only for the payoff assumption \ref{annahmenExistenzGG} we have to adjust the range of possible values for the expected size of the forest stand. This is done in the following lemma:

\begin{lemma}\label{lem:bounds_z}
	Let\ $R$ be an admissible impulse control strategy. Then
	\[z_1\leq \liminf_{T\rightarrow\infty}\E [X_T^R] \leq \limsup_{T\rightarrow\infty}\E [X_T^R] \leq z_2,\]
	where 
	$z_1:=\E [X_\infty^{r}]$ for the diffusion process $X^r$ reflected downwards in\ $y_0$ and $z_2:=\E [X_\infty]$ for the uncontrolled diffusion process $X$.
\end{lemma}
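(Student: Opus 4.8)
The plan is to bracket the controlled process pathwise between the two extreme harvesting regimes and then pass to the limit in expectation. Concretely, I would realise all three processes on one probability space, driven by the \emph{same} Brownian motion $W$: the uncontrolled diffusion $X$ with $X_0=x$ (the ``never cut'' regime), the controlled process $X^R$ with $X^R_0=x$, and the diffusion $X^r$ reflected downward at $y_0$ (the ``cut as aggressively as admissible'' regime), started in $X^r_0=\min\{x,y_0\}$. Writing $X^r_t = X^r_0 + \int_0^t \mu(X^r_s)\,ds + \int_0^t \sigma(X^r_s)\,dW_s - L_t$ with $L$ the nondecreasing reflection local time supported on $\{X^r=y_0\}$ (so that $X^r\le y_0$ throughout), and $X^R_t = x + \int_0^t \mu(X^R_s)\,ds + \int_0^t \sigma(X^R_s)\,dW_s - A_t$ with $A_t=\sum_{n:\tau_n\le t}(X^R_{\tau_n-}-y_0)$ the nondecreasing pure-jump process of the harvests, the goal is the pathwise ordering
\[
X^r_t \le X^R_t \le X_t \qquad \text{for all } t\ge 0,\ \text{a.s.}
\]

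First I would prove the upper inequality $X^R_t\le X_t$. Both processes solve the same SDE between the harvesting times and start at the same point, while $X^R$ in addition only ever jumps \emph{downward} (at each $\tau_n$ it drops from $X^R_{\tau_n-}\ge y_0$ to $y_0$). Applying Tanaka's formula to $(X^R-X)^+$ and using that the coefficients agree when the two processes coincide (so the local time of $X^R-X$ at $0$ does not contribute) shows $(X^R_t-X_t)^+\equiv 0$: the continuous dynamics cannot push $X^R$ above $X$ once they touch, and the downward jumps of $A$ only help. For the lower inequality $X^r_t\le X^R_t$ the same method applies to $(X^r-X^R)^+$: the reflection term $-L$ can only push $X^r$ further down, hence preserves the ordering, and at each harvesting time $\tau_n$ the process $X^R$ lands at $y_0\ge X^r_{\tau_n}$, so the downward jumps of $X^R$ also keep $X^r\le X^R$. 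The delicate point is exactly this interplay of the local time $L$ and the jumps of $A$ in the comparison; I expect it to be the main obstacle, and I would handle it by a careful Tanaka/Meyer--It\^o argument (alternatively by invoking a standard comparison theorem for one-dimensional reflected SDEs together with the boundary bookkeeping for the jumps), using the regularity of $\mu,\sigma$ that guarantees pathwise uniqueness.

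Taking expectations in the sandwich yields $\E[X^r_t]\le \E[X^R_t]\le \E[X_t]$ for every $t$, and it remains to let $t\to\infty$. By Assumption \ref{ass:process}.1 the uncontrolled process $X$ is positively recurrent with integrable stationary law, so $X_t\Rightarrow X_\infty$; together with uniform integrability of $\{X_t\}_{t\ge 0}$ (which I would establish from the recurrence and the eventual decrease of $\mu$ in Assumption \ref{ass:process}.2, e.g.\ via a Lyapunov argument) this gives $\E[X_t]\to \E[X_\infty]=z_2$. The reflected process $X^r$ lives on $(0,y_0]$ with a reflecting boundary at $y_0$ and an entrance boundary at $0$; it is likewise positively recurrent, its stationary law is integrable (indeed bounded by $y_0$), and $\E[X^r_t]\to \E[X^r_\infty]=z_1$ by the same reasoning, with uniform integrability here automatic from $0<X^r\le y_0$. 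Combining the two limits,
\[
z_1=\lim_{t\to\infty}\E[X^r_t]\le \liminf_{T\to\infty}\E[X^R_T]\le \limsup_{T\to\infty}\E[X^R_T]\le \lim_{t\to\infty}\E[X_t]=z_2,
\]
which is the assertion. Beyond the pathwise comparison, the only genuinely technical input is the two convergence-of-means statements; for $X^r$ this is immediate from the uniform bound $X^r\le y_0$, while for the uncontrolled process it is the single place where a uniform-integrability estimate must be supplied.
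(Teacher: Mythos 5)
Your proposal is correct and follows the same overall strategy as the paper's proof: sandwich the controlled process pathwise between the reflected process $X^r$ and the uncontrolled process $X$, then take expectations and pass to the limit. The two arguments differ in how the pathwise ordering is obtained and in how carefully the limit is handled. The paper establishes $X^r_t\le X^R_t\le X_t$ by an elementary piecewise coupling: the comparison process is run \emph{identically} to $X^R$ until an intervention (resp.\ until level $y_0$ is reached), the two paths then evolve with the same Brownian motion until they next meet, at which moment they are glued together again; since gluing happens exactly when the paths touch, the ordering can never be violated, and no Tanaka/Meyer--It\^o machinery is needed --- only pathwise uniqueness. Your primary route via Tanaka's formula is weaker than you suggest at one point: the claim that the local time of $X^R-X$ at $0$ does not contribute ``because the coefficients agree when the processes coincide'' is not justified under mere continuity of $\mu,\sigma$; controlling that local time is precisely where Lipschitz or Yamada-type conditions enter, and without them one-dimensional comparison can genuinely fail. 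Your stated fallback --- a comparison theorem that invokes the pathwise uniqueness the paper assumes --- is what actually closes this step, and it is in substance identical to the paper's gluing construction. Conversely, you are more careful than the paper at the final step: after taking expectations one needs $\limsup_{T\to\infty}\E[X_T]\le \E[X_\infty]=z_2$, which is exactly the direction that weak convergence plus Fatou does \emph{not} provide, so a uniform-integrability (Lyapunov-type) estimate for the uncontrolled diffusion is a genuine, if standard, ingredient that the paper leaves implicit and your write-up makes explicit; for $X^r$ both you and the paper get the limit for free from the bound $0<X^r\le y_0$.
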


\begin{proof}
	First note that the expectations $z_1,z_2$ exist by Assumption \ref{ass:process} and as $X^r$ is positively recurrent.
	The inequalities can be proved using an easy (partial) coupling argument: \\
	We construct a version of $X^{r}$ by letting it run coupled with $X^R$ until a state $\geq y_0$ is reached. 
	Then, we reflect $X^{r}$ in\ $y_0$ downwards and let both processes run following their dynamics with respect to the same Brownian motion until the first time the two paths meet again. 
	Then, we couple the paths and follow this rule. 
	Consequently, for each $t$ and each\ $\omega$, we have $X^{r}_t(\omega)\leq X^R_t(\omega)$, proving the first inequality. \\
	Similarly, we construct a version of the uncontrolled diffusion $X$ by running coupled with $X^R$ until the first impulse time. Then we let both processes run following their dynamics with respect to the same Brownian motion until we couple them the next time the two paths meet and so on. Again, for each $t$ and each\ $\omega$, we have $X^R_t(\omega)\leq X_t(\omega).$
\end{proof}

With this preparation we can restate Assumption \ref{annahmenExistenzGG}:

\begin{assumption}\label{annahmenExistenzGGEW}
		The function $\gamma$ is of the form $\gamma(y,z)=(y-y_0)\varphi(z)$ for a continuously differentiable and strictly decreasing function  $\varphi:[z_1,z_2] \rightarrow \mathbb{R}_+$.
\end{assumption}

As a next step we investigate the interaction term $\mathbb{E}[X_\infty^Q]$ for an economically reasonable class of strategies that moreover lead to a stationary controlled process. This class of strategies is the set of all admissible stationary strategies with the additional requirement that $\tau$ is non-lattice. We remark, that threshold strategies $R(y)$ with $y>y_0$ satisfy this requirement.
	
	\begin{proposition}
		For each admissible stationary strategy $R$ with corresponding stopping time $\tau$ such that $\tau$ is non-lattice,  the stationary distribution for the process $X^R$, denoted by $\Pi$, exists and is given by
		\[ \int f(x)\Pi(dx)=\frac{1}{\E_{y_0}[\tau]}\E_{y_0} \left[\int_0^\tau f(X_s)ds \right]. \]
	\end{proposition}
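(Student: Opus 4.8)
The plan is to recognize $X^R$ as a classical regenerative process and to read off $\Pi$ as its cycle occupation measure. First I would establish the regenerative structure. By Definition \ref{def:admi_stat_strat} the impulse times satisfy $\tau_{n+1}=\tau\circ\theta_{\tau_n}+\tau_n$, and at each $\tau_n$ the controlled process is reset to the fixed level $y_0$, i.e. $X^R_{\tau_n}=y_0$. Hence the epochs $\tau_1<\tau_2<\cdots$ are regeneration times: by the strong Markov property the post-$\tau_n$ evolution depends on the past only through the value $X^R_{\tau_n}=y_0$, so the cycles $\big(X^R_{\tau_n+s}\big)_{0\le s<\tau_{n+1}-\tau_n}$ are independent and identically distributed, each distributed as $(X_s)_{0\le s<\tau}$ under $\Pro_{y_0}$. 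The cycle lengths $\tau_{n+1}-\tau_n$ are i.i.d.\ copies of $\tau$ under $\Pro_{y_0}$, which by admissibility have finite mean $\E_{y_0}[\tau]<\infty$ and are non-lattice by assumption.

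Next I would invoke the standard theory of regenerative processes. Defining, for a bounded measurable $f$,
\[
\int f\, d\Pi := \frac{1}{\E_{y_0}[\tau]}\, \E_{y_0}\!\left[\int_0^\tau f(X_s)\,ds\right],
\]
the choice $f\equiv 1$ gives total mass one, so $\Pi$ is a probability measure, while $f=\ind_A$ exhibits $\Pi(A)$ as the expected fraction of a cycle that the process spends in $A$. The renewal-reward theorem yields the almost sure time-average convergence $\frac1t\int_0^t f(X^R_s)\,ds \to \int f\,d\Pi$, and because the cycle length is non-lattice the key renewal theorem upgrades this to convergence of the one-dimensional marginals, $\E[f(X^R_t)]\to\int f\,d\Pi$ as $t\to\infty$. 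This identifies $\Pi$ as the limiting marginal distribution of $X^R$; since $X^R$ is regenerative, $\Pi$ is moreover the marginal law of its stationary (stationary-delay) version at every $t$, so it is the stationary distribution in the required sense.

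The main obstacle is not the final formula, which is the textbook occupation-measure identity, but the careful verification of the two hypotheses that make it applicable: that the reset to the \emph{deterministic} level $y_0$ together with the strong Markov property genuinely produces i.i.d.\ cycles (here the fact that $y_0$ is fixed rather than chosen by the controller is what guarantees that every regeneration starts from an identical distribution), and that the non-lattice assumption on $\tau$ is precisely what is needed to pass from the Ces\`aro/time-average statement to convergence in distribution via the key renewal theorem. Once these two points are in place, the displayed characterization of $\Pi$ is immediate.
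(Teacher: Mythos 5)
Your proof is correct and takes essentially the same route as the paper: the paper's proof is a one-liner observing that $X^R$ is a regenerative process in the sense of Asmussen (Chapter VI) and citing the standard limit theorem for regenerative processes (Theorem 1.2 there), whose hypotheses --- i.i.d.\ cycles produced by the resets to the fixed level $y_0$ via the strong Markov property, finite mean cycle length, and the non-lattice condition --- are precisely the points you verify explicitly before invoking the renewal-type limit theorem. You have simply unpacked the details that the paper delegates to the citation.
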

	\begin{proof}
		It is immediately seen that $X^R$ is a regenerative processes in the sense of \cite{asmussen2008applied}, Chapter VI. Therefore, the result holds by ibid, Theorem 1.2 on p.170.
	\end{proof}

	In the case that $R=R(y)$ is a threshold strategy, a more explicit description of the limiting distribution is possible.
	By standard diffusion theory, see e.g. \cite{helmes2017continuous}, Proposition 3.1, we have that $X^{R(y)}$ has a stationary distribution with density 
	\begin{align}
		\label{DensityOfStationaryProcess}
		\pi_{y_0,y} (x) = \begin{cases}
			0, &x>y \\ \kappa m(x) S[x,y], &x \in [y_0,y] \\  \kappa m(x) S[y_0,y] &x \le y_0 
		\end{cases},
	\end{align} where $S[x,y]:=S(y)-S(x)$ denotes the Stieltjes measure and $$\kappa = \left(\int_ {y_0}^y S[w,y]\ M(dw) + S[y_0,y]M[0,y_0) \right)^{-1}.$$
	Using this we obtain
	\[
	\mathbb{E} \left[ X_\infty^{R(y)} \right] = \int_{-\infty}^y x \pi_{y_0,y}(x) \text{d} x,
	\] which in particular yields that the map $y \mapsto \mathbb{E} \left[ X_\infty^{R(y)} \right]$ is continuous.
	
	Moreover, we also obtain in this setting a monotonicity result. However, whereas the harvesting rate was decreasing in the threshold value (at least for those thresholds of interest), here we obtain that the expected volume of wood is increasing in the threshold. 
		
	\begin{lemma}
		\label{ExpectationIncreasing}
		For all $y_1< y_2$ it holds that $X_\infty^{R(y_1)}<_{st} X_\infty^{R(y_2)}$, where $<_{st}$ denotes the stochastic  ordering.\\
		In particular, $\mathbb{E}[X_\infty^{R(y)}]$ is increasing in the threshold level $y$.
	\end{lemma}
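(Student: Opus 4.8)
The plan is to work directly with the explicit stationary density $\pi_{y_0,y}$ from \eqref{DensityOfStationaryProcess} and to establish the stronger \emph{monotone likelihood ratio} ordering, which is well known to imply the stochastic ordering $<_{st}$. A pathwise coupling in the spirit of Lemma \ref{lem:bounds_z} is tempting but problematic here: whenever the process with the larger threshold $y_2$ is reset to $y_0$, it drops below the process with the smaller threshold $y_1$, so the natural ordering of the two controlled paths is destroyed at such cutting times. Hence I would argue at the level of the stationary laws rather than pathwise.

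Concretely, fix $y_1 < y_2$ and let $\kappa_1, \kappa_2$ be the corresponding normalising constants. The first step is to form the likelihood ratio
\[
\rho(x) := \frac{\pi_{y_0,y_2}(x)}{\pi_{y_0,y_1}(x)}
\]
on the support $(0,y_1]$ of $\pi_{y_0,y_1}$ and to check that it is non-decreasing. On $(0,y_0]$ both densities are proportional to $m(x)$ with constant prefactors, so $\rho$ is constant there. On $[y_0,y_1]$ the factors $m(x)$ cancel and $\rho(x) = (\kappa_2/\kappa_1)\,\frac{S(y_2)-S(x)}{S(y_1)-S(x)}$; writing this as a function of the increasing variable $S(x)$ and using $S(y_2) > S(y_1)$, a one-line derivative computation shows it is strictly increasing. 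The two pieces match at $x=y_0$, so $\rho$ is non-decreasing on all of $(0,y_1]$. Finally, on $(y_1,y_2]$ we have $\pi_{y_0,y_1}\equiv 0$ while $\pi_{y_0,y_2}>0$, so $\rho\equiv +\infty$ there, which is consistent with the monotone likelihood ratio property in the extended sense.

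The second step is to invoke the standard implication that the monotone likelihood ratio ordering implies the usual stochastic ordering, giving $X_\infty^{R(y_1)} \le_{st} X_\infty^{R(y_2)}$; strictness follows since $\Pro(X_\infty^{R(y_2)} > y_1) > 0 = \Pro(X_\infty^{R(y_1)} > y_1)$, so in fact $<_{st}$ holds. The ``in particular'' statement is then immediate: the stochastic ordering yields $\E[h(X_\infty^{R(y_1)})] \le \E[h(X_\infty^{R(y_2)})]$ for every increasing $h$, and applying this to $h(x)=x$ (which is integrable by Assumption \ref{ass:process} together with the bounds in Lemma \ref{lem:bounds_z}) shows that $\E[X_\infty^{R(y)}]$ is increasing in $y$.

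The main obstacle, and the only place requiring genuine care, is the bookkeeping of the piecewise structure: one must verify monotonicity separately on $(0,y_0]$, on $[y_0,y_1]$, at the matching point $y_0$, and handle the jump to $+\infty$ across $y_1$. The constants $\kappa_1,\kappa_2$ enter only as a positive multiplicative factor and therefore do not affect the monotonicity of $\rho$, which is exactly what makes the likelihood-ratio route cleaner than attempting to compare the cumulative distribution functions directly.
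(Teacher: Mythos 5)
Your proof is correct, but it takes a genuinely different route from the paper's. The paper also argues at the level of the explicit stationary density \eqref{DensityOfStationaryProcess} (no coupling, precisely for the reason you identify), but instead of forming a likelihood ratio it fixes $x$ and differentiates $\pi_{y_0,y}(x)$ with respect to the threshold $y$: it shows $\frac{\partial}{\partial y}\pi_{y_0,y}(x)<0$ for $x\le y_0$, and that for $x\in[y_0,y]$ the derivative factors as $m(x)h(x,y)$ with $h(\cdot,y)$ increasing in $x$; integrating over $y\in[y_1,y_2]$ then shows that the difference $\pi_{y_0,y_2}-\pi_{y_0,y_1}$ changes sign exactly once, and this single-crossing property yields the stochastic ordering. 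Your monotone-likelihood-ratio argument is shorter and delivers a strictly stronger intermediate result: since $m(x)$ and the normalising constants $\kappa_1,\kappa_2$ cancel in the ratio, monotonicity reduces to observing that $\frac{S(y_2)-u}{S(y_1)-u}=1+\frac{S(y_2)-S(y_1)}{S(y_1)-u}$ is increasing in $u=S(x)$, and the likelihood-ratio order implies both the usual stochastic order and the paper's single-crossing property. What the paper's computation buys is an explicit formula for $\frac{\partial}{\partial y}\pi_{y_0,y}(x)$, but this is not reused elsewhere, so nothing is lost by your shortcut. One cosmetic remark: the increasing-test-function characterization you invoke gives weak monotonicity of the mean; if strict increase is wanted, combine the strict inequality of the survival functions on $(y_1,y_2)$ with $\E[X]=\int_0^\infty\Pro(X>t)\,dt$ to get $\E[X_\infty^{R(y_1)}]<\E[X_\infty^{R(y_2)}]$ --- the paper is equally terse on this point.
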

	
	\begin{proof}
		We show that given an arbitrary pair $y_1 < y_2$ there is a switching point $z\in[y_0,y_1]$ such that for the corresponding densities it holds that
		$$\pi_{y_0, y_1} (x) > \pi_{y_0, y_2} (x) , x < z, \quad \pi_{y_0, y_1}(x) \le \pi_{y_0, y_2}(x), x \ge z.$$ This immediately yields the statement.
		To this end, we first prove that for fixed $y_0$ and $x \le y_0$ the density $\pi_{y_0,y}(x)$ is decreasing in $y$:
		We have $$\pi_{y_0,y}(x) = m(x) \frac{g_1(y)}{f_1(y)+f_2(y)}$$ 
		with 
		$$f_1(y) = \int_{y_0}^y S[w,y] dM(w), \quad f_2(y) = S[y_0,y] M[0,y_0],\quad g_1(y) = S[y_0,y]$$
		Using $$f_1'(y) = s(y) M[y_0,y], \quad f_2'(y) = s(y) M[0,y_0], \quad g_1'(y) = s(y)$$ 
		we obtain
		\begin{align*}
			\frac{\partial}{\partial y} \pi_{y_0,y}(x) 
			&= m(x) \frac{g_1'(y) (f_1(y)+f_2(y))-g_1(y) (f_1'(y)+f_2'(y))}{(f_1(y)+f_2(y))^2} \\
			&= \frac{m(x)}{(f_1(y)+f_2(y))^2} s(y) \Bigg[ \int_{y_0}^y S[w,y] dM(w) + S[{y_0},y] M[0,{y_0}] \\
			&\quad   - S[{y_0},y] M[{y_0},y] - S[{y_0},y] M[0,{y_0}] \Bigg] \\
			&= \frac{m(x)}{(f_1(y)+f_2(y))^2} s(y) \left[ \int_{y_0}^y \underbrace{(S[w,y]-S[{y_0},y])}_{=-S[{y_0},w]} dM(w) \right] <0.
		\end{align*} 
		This yields $\pi_{{y_0},y_1}(x)>\pi_{{y_0},y_2}(x)$ for all $x\leq y_0$. It remains to consider the case  $x> y_0$. We first show that for $y\in[y_0,y_1]$ and $x \in [{y_0},y]$ the derivative $\frac{\partial}{\partial y} \pi_{{y_0},y}(x)$ may be decomposed as follows:
		$$\frac{\partial}{\partial y} \pi_{{y_0},y}(x)=m(x)h(x,y),$$ where $h(x,y)$ is increasing in $x$. Indeed, for all $x \in [{y_0},y]$ using the notation $g_2(y) = S[x,y]$ we get
		\begin{align*}
			\frac{\partial}{\partial y} \pi_{{y_0},y}(x) &= m(x) \frac{g_2'(y) (f_1(y)+f_2(y)) - g_2(y)(f_1'(y)+f_2'(y))}{(f_1(y)+f_2(y))^2} \\
			&= \frac{m(x) s(y)}{(f_1(y)+f_2(y))^2} \Bigg[ \int_{y_0}^y S[w,y] dM(w) + S[{y_0},y] M[0,{y_0}]  \\
			&\quad  -S[x,y] M[{y_0},y]- S[x,y] M[0,{y_0}] \Bigg]\\&= m(x)\left\{
			\frac{1}{(f_1(y)+f_2(y))^2}\left(\int_{y_0}^y S[w,x]dM(w)+S[y_0,x]M[0,y_0]\right)\right\}\\
			&=:m(x)h(x,y),
		\end{align*}	
		where $h(x,y)$ is indeed obviously increasing in $x$.
		This decomposition is sufficient as it yields that $$\pi_{{y_0},y_2}(x) - \pi_{{y_0},y_1}(x) = m(x)\int_{y_1}^{y_2} h(x,y) \text{d} y$$ changes sign just once. Hence, using $\pi_{{y_0},y_1}(x)>\pi_{{y_0},y_2}(x)$ for $x\le y_0$ and $\pi_{{y_0},y_1}(x)=0<\pi_{{y_0},y_2}(x)$ for $x\in(y_1,y_2)$, there exists some $z\in[y_0,y_1]$ satisfying above conditions.\\	
	\end{proof}

	\subsection{The Mean Field Game}
	
With these preparations we can analyse the existence of mean field equilibria in threshold strategies:

\begin{thm}
	Let Assumptions \ref{ass:process} and \ref{annahmenExistenzGGEW} hold. Then a mean field equilibrium exists.
\end{thm}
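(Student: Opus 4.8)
The plan is to mirror the proof of Theorem \ref{haupsatzproblem}, replacing the harvesting-rate interaction map $y\mapsto\frac{y-y_0}{\xi(y)}$ by the supply map $y\mapsto\mathbb{E}[X_\infty^{R(y)}]$ and reducing existence to Brouwer's fixed point theorem. First I would record the best-response map. For a fixed interaction value $z\in[z_1,z_2]$, the representative agent facing the population in its stationary regime solves the classical impulse control problem with per-cut reward $\gamma(X_{\tau_n-},z)-K=\varphi(z)(X_{\tau_n-}-y_0)-K$. Since $\varphi(z)>0$ we may write
\[
\frac{\gamma(y,z)-K}{\xi(y)}=\varphi(z)\,\frac{y-y_0-K/\varphi(z)}{\xi(y)}
\]
and invoke Lemma \ref{Lemma:UniqueOptimizer} with $\tilde{K}=K/\varphi(z)$. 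Exactly as in Lemmas \ref{Lemma:MaximizerProperties} and \ref{Lemma:BRContinuous}, this yields a unique maximizing threshold $g(z)\in[\hat{y}_0,\infty)$ together with the continuity of $g$; the proofs transfer verbatim because they use only the positivity of $\varphi$, not the particular range of $z$. By Proposition \ref{ControlToolboxTheorem}, $R(g(z))$ is then optimal among all admissible strategies for the fixed value $z$.

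Next I would combine this with the supply map $e(y):=\mathbb{E}[X_\infty^{R(y)}]$. By the discussion following \eqref{DensityOfStationaryProcess} the map $e$ is continuous, and by Lemma \ref{lem:bounds_z} every threshold strategy satisfies $e(y)\in[z_1,z_2]$ (being stationary, $\mathbb{E}[X_T^{R(y)}]=e(y)$ for all $T$), so $\varphi$ may indeed be evaluated at $e(y)$. Letting $[\underline{y},\bar{y}]$ denote the compact set of all best-response thresholds arising as $z$ ranges over $[z_1,z_2]$ (compactness as in Lemma \ref{Lemma:BRContinuous}), we obtain $g:[z_1,z_2]\to[\underline{y},\bar{y}]$ and $e:[\underline{y},\bar{y}]\to[z_1,z_2]$. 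Hence the composition $\Psi:=g\circ e$ is a continuous self-map of the compact interval $[\underline{y},\bar{y}]$, and Brouwer's fixed point theorem furnishes a fixed point $y^g=g(e(y^g))$.

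Finally I would check that such a fixed point is a mean field equilibrium. When the whole population plays $R(y^g)$, the induced interaction value is exactly $e(y^g)$, and the identity $y^g=g(e(y^g))$ says precisely that $R(y^g)$ is the optimal threshold response to $z=e(y^g)$; by Proposition \ref{ControlToolboxTheorem} it is optimal among all admissible strategies, which is the defining property of an equilibrium.

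The main obstacle I anticipate is not the fixed-point step but the bookkeeping ensuring that the two maps have compatible domains: that $e$ carries the relevant thresholds into the interval $[z_1,z_2]$ on which $\varphi$ is defined (guaranteed by Lemma \ref{lem:bounds_z}) and that the best-response analysis of Lemma \ref{Lemma:MaximizerProperties} genuinely transfers to this new range of $z$. I would also expect uniqueness to fail here, in contrast to Theorem \ref{thm:unique eq}: since $e$ is increasing (Lemma \ref{ExpectationIncreasing}) and $g$ is increasing in $z$ (again via Lemma \ref{Lemma:UniqueOptimizer}, as $K/\varphi(z)$ increases in $z$), the map $\Psi$ is \emph{increasing} rather than decreasing, so the monotonicity contradiction driving uniqueness in the harvesting-rate model is unavailable.
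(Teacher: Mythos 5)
Your proposal is correct and follows essentially the same route as the paper: the paper likewise writes the equilibrium map as the composition of the supply map $y\mapsto\mathbb{E}[X_\infty^{R(y)}]$ (continuous via \eqref{DensityOfStationaryProcess}, with range $[z_1,z_2]$) and the best-response map $g$ (continuous as in Lemma \ref{Lemma:BRContinuous}), and applies Brouwer's fixed point theorem on a compact interval of thresholds. Your additional observations --- the explicit verification that a fixed point is an equilibrium via Proposition \ref{ControlToolboxTheorem}, and that the composed map is increasing so uniqueness cannot be obtained this way --- match the paper's surrounding discussion exactly.
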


\begin{proof}
	As in the case of average harvesting rates a mean field equilibrium in threshold strategies is given by a fixed point of the slightly modified map 
	\[
	\Phi: [y_0, \bar{y}] \rightarrow [y_0,\bar{y}], \quad y \mapsto \argmax_{\tilde{y} \in [y_0,\bar{y}]} \frac{\gamma \left(\tilde{y}, \mathbb{E}\left[ X_\infty^{R(y)}\right]\right)}{\xi(\tilde{y})}.
	\]
	This function can be again represented as a composition of two functions $f$ and $g$, where 
	\[
	f:[y_0, \bar{y}] \rightarrow [z_1,z_2], \quad y \mapsto \mathbb{E}_x\left[ X_\infty^{R(y)} \right]
	\]
	and $g$ is defined as in \eqref{eq:BRMap} with the domain now given by $[z_1,z_2]$. By previous discussions the map $f$ is continuous and also the continuity of $g$ can be proved as in Lemma \ref{Lemma:BRContinuous}. Thus, a fixed point of the map $\Phi$ and thus a mean field equilibrium exists.
\end{proof}

In contrast to the case of average harvesting rates here we do not obtain a general uniqueness criterion under assumption \ref{annahmenExistenzGGEW}. Indeed, we obtain that the function $\Phi$ is increasing instead of decreasing when we assume that $\phi$ is additionally differentiable: Recall the notation from Lemma \ref{Lemma:BRContinuous}. By the implicit function theorem we have that 
\[\frac{\partial g(z)}{\partial z} = - \frac{\frac{\partial}{\partial x_2} \tilde{F}(g(z),z)}{\frac{\partial}{\partial x_1} \tilde{F}(g(z),z)}. \]
By Lemma \ref{Lemma:MaximizerProperties} we have $\frac{\partial}{\partial x_1} \tilde{F}(g(z),z)<0$. Moreover, by definition any critical point $g(z)$ given $z$ satisfies
\[
	\xi(g(z)) - (g(z)-y_0)\xi'(g(z)) = -\frac{K}{\phi(z)} \xi'(g(z)),
\] which yields
\begin{align*}
	\frac{\partial}{\partial x_2} \tilde{F}(g(z),z)
	&= \phi'(z) \left( \xi(g(z))-(g(z)-y_0)\xi'(g(z)) \right) \\
	&= - \phi'(z) \frac{K}{\phi(z)} \xi'(g(z))>0.
\end{align*} Thus, $g$ is increasing. Since $f$ is also increasing, we obtain that $\Phi$ itself is increasing. 

\subsection{The Mean Field Type Control Problem}

The analysis for the mean field type control problem can be carried out similarly as before. The central technical difference is that now an auxiliary problem with running costs has to be investigated. To this end, we need the following assumption:

\begin{assumption}\label{annahmenExistenzGG3}
	\begin{enumerate}
		\item For every $z\in [z_1,z_2]$ and every $\lambda\in(0,\infty)$ there is a unique critical point $y=y_z\in(y_0,\infty)$ of $$\frac{\gamma(y,z)-K- \lambda \mathbb{E}_{y_0}\left[\int_0^{\tau_y} X_s \text{d} s \right]}{\xi(y)}$$ and this is a global maximum. 
		\item  For all pairs  $(y,z)$ describing a critical point as given before, it holds 
		$$\frac{\partial^2}{\partial y^2}\frac{\gamma(y,z)-K-\lambda \mathbb{E}_{y_0}\left[ \int_0^{\tau_y}X_s \text{d} s\right]}{\xi(y)}<0$$
	\end{enumerate}
\end{assumption}

\begin{thm}
	Under Assumptions \ref{ass:process}, \ref{annahmenExistenzGGEW}, and \ref{annahmenExistenzGG3}, the value for the problem of
	maximizing
	\begin{equation}\label{eq:mean_field_problem_old}
		J_x(R,R)= \liminf_{T\rightarrow\infty} \frac{1}{T}\E_x\left[\sum_{n:\tau_n\leq T} \left(\gamma \left(X_{\tau_n-}^R,\frac{\mathbb{E}[\hat{X}_\tau^R]-y_0}{\mathbb{E}[\tau]}\right)-K\right)\right]\end{equation}
 is 
\begin{align*}\label{Problem:old}
		\sup_yH(y),\;\;H(y)=\frac{\gamma \left(y,\mathbb{E} \left[ X_\infty^{R(y)}\right] \right)-K}{\xi(y)}
\end{align*}
	and if $y^*$ is a maximizer of $H$, then the threshold strategy $R=R(y^\ast)$ is optimal in the class of all stationary strategies. 
\end{thm}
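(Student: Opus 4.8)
The plan is to follow the Lagrangian scheme of the proof of Theorem \ref{Theorem:ControlProblem}, with the harvesting rate replaced throughout by the stationary wood volume $z=\mathbb{E}[\hat{X}_\infty^R]$. By Lemma \ref{lem:bounds_z} this interaction value is confined to $[z_1,z_2]$, so I would first split the problem as $\sup_R J_x(R,R)=\sup_{z\in[z_1,z_2]}\sup_{R:\,\mathbb{E}[X_\infty^R]=z}\liminf_{T\to\infty}\frac{1}{T}\mathbb{E}_x[\sum_{n:\tau_n\le T}(\gamma(X_{\tau_n-}^R,z)-K)]$, where inside the inner supremum $z$ is frozen, so that $\gamma(\cdot,z)=\varphi(z)(\cdot-y_0)$ is an ordinary increasing gain function. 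It then suffices to show that each inner (constrained) problem is solved by a threshold strategy whose stationary volume equals $z$, and to reassemble. Since $J_x(R(y),R(y))=H(y)$ for threshold strategies (each harvest occurs at level $y$, at long-run rate $1/\xi(y)$), threshold strategies already attain $\sup_y H(y)$, so the only remaining content is the matching upper bound for an arbitrary stationary $R$.

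For fixed $z$ and a multiplier $\lambda>0$ I would pass to the relaxed problem obtained by subtracting $\lambda(\mathbb{E}[X_\infty^R]-z)$ from the objective. Using the regenerative structure underlying the stationary distribution $\Pi$ and the ergodic limit $\frac{1}{T}\int_0^T X_s^R\,ds\to\mathbb{E}[X_\infty^R]$ a.s.\ and in $L^1$, the relaxed objective rewrites as a long-run average impulse problem with gain $\gamma(\cdot,z)$ and running cost $h(x)=\lambda x$ (nonnegative and of linear growth), plus the constant $\lambda z$. This is exactly the setting of Proposition \ref{ControlToolboxTheorem}, and Assumption \ref{annahmenExistenzGG3} guarantees that the associated objective $y\mapsto(\gamma(y,z)-K-\lambda\mathbb{E}_{y_0}[\int_0^{\tau_y}X_s\,ds])/\xi(y)$ has a unique maximizing threshold $y(\lambda,z)$. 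The key bookkeeping identity is $\mathbb{E}_{y_0}[\int_0^{\tau_y}X_s\,ds]/\xi(y)=\mathbb{E}[X_\infty^{R(y)}]$, immediate from the renewal representation of $\Pi$ together with $\xi(y)=\mathbb{E}_{y_0}[\tau_y]$. With it, the optimal relaxed value equals $(\gamma(y(\lambda,z),z)-K)/\xi(y(\lambda,z))-\lambda(\mathbb{E}[X_\infty^{R(y(\lambda,z))}]-z)$, and weak duality gives $J_x(R,R)\le$ this quantity for every $\lambda>0$ and every feasible $R$.

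The main obstacle is producing a multiplier $\lambda_z$ for which the constraint is exactly met, so that the penalty drops out and the bound collapses to $H(y(\lambda_z,z))\le\sup_y H(y)$. I would show that $\lambda\mapsto y(\lambda,z)$ is continuous (implicit function theorem as in Lemma \ref{Lemma:BRContinuous}, using the second-order condition of Assumption \ref{annahmenExistenzGG3}) and decreasing, with $y(\lambda,z)\to y_z$ as $\lambda\to0^+$ (the unconstrained optimizer of Lemma \ref{Lemma:MaximizerProperties}) and $y(\lambda,z)\to y_0$ as $\lambda\to\infty$. Composing with the continuous increasing map $y\mapsto\mathbb{E}[X_\infty^{R(y)}]$ of Lemma \ref{ExpectationIncreasing}, the attainable volumes $\mathbb{E}[X_\infty^{R(y(\lambda,z))}]$ sweep the interval $(z_1,\zeta(z))$ with $\zeta(z):=\mathbb{E}[X_\infty^{R(y_z)}]$; hence for $z<\zeta(z)$ the intermediate value theorem yields $\lambda_z$ and the bound follows. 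The complementary case $z\ge\zeta(z)$ cannot be reached with a nonnegative running cost (it would require $\lambda<0$, outside Proposition \ref{ControlToolboxTheorem}), so there I would argue directly: the unconstrained ($h\equiv0$) problem gives $J_x(R,R)\le(\gamma(y_z,z)-K)/\xi(y_z)$, and since $\varphi$ is strictly decreasing, $z\ge\zeta(z)=\mathbb{E}[X_\infty^{R(y_z)}]$ forces $\varphi(z)\le\varphi(\zeta(z))$ and thus $(\gamma(y_z,z)-K)/\xi(y_z)\le H(y_z)\le\sup_y H(y)$. In both cases $J_x(R,R)\le\sup_y H(y)$, which together with attainment by threshold strategies proves the claim. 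The delicate points, as in Theorem \ref{Theorem:ControlProblem}, are the $L^1$-convergence justifying the ergodic rewrite and the surjectivity of $\lambda\mapsto\mathbb{E}[X_\infty^{R(y(\lambda,z))}]$ onto a neighbourhood of the relevant $z$.
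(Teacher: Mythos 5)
Your proposal is correct and follows essentially the same route as the paper's proof: the same decomposition over $z\in[z_1,z_2]$ (via Lemma~\ref{lem:bounds_z}), the same Lagrangian relaxation rewritten through the ergodic/renewal identity into an impulse problem with running cost $\lambda x$, the same appeal to Proposition~\ref{ControlToolboxTheorem} and Assumption~\ref{annahmenExistenzGG3}, and the same intermediate-value argument (continuity of $\lambda\mapsto y(\lambda,z)$ plus Lemma~\ref{ExpectationIncreasing}) producing the multiplier $\lambda_z$. The only minor difference is how the constraint levels $z$ unreachable by a nonnegative multiplier are handled: you bound the restricted value directly by $H(y(0,z))\le\sup_y H(y)$ using the monotonicity of $\varphi$, whereas the paper rules such $z$ out as maximizers via the contradiction $J_x(R(y(z)),R(y(z)))< J_x(R(y(0,z)),R(y(0,z)))$ --- the same comparison, packaged differently.
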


\begin{proof}
	As in the case of average harvesting rates it suffices to consider for fixed $z \in [z_1, z_2]$ the restricted problem 
	\begin{equation}\label{eq:restr_problemEW}
		\sup_{R\mbox{ with } \mathbb{E}[\hat{X}_\infty^R]=z}\liminf_{T\rightarrow\infty} \frac{1}{T}\E_x\left[\sum_{n:\tau_n\leq T}(\gamma(X_{\tau_n-}^R,z)-K)\right]
	\end{equation}
	and prove that a threshold strategy is optimal. Also here we follow the Lagrange approach and consider now for a fixed $\lambda \ge 0$ the associated unconstrained problem
	\begin{equation}\label{eq:lagrangeEW}
		\sup_{R}\liminf_{T\rightarrow\infty} \frac{1}{T}\E_x\left[\sum_{n:\tau_n\leq T}(\gamma(X_{\tau_n-}^R,z)-K)\right]-\lambda\left(\mathbb{E}[\hat{X}_\infty^R]-z\right).
	\end{equation}
	Using standard caculus we obtain 
	\[\mathbb{E}\left[\hat{X}_\infty^R\right]=\lim_{T\rightarrow\infty}\E \left[\hat X^{ R}_T \right] =\lim_{T\rightarrow\infty}\E_x  \left[X^{ R}_T \right] =\lim_{T\rightarrow\infty}\frac{1}{T}\E_x \left[\int_0^T X^R_tdt\right] , \]
	and hence, we can rewrite\eqref{eq:lagrangeEW} as
	\begin{equation*}
		\sup_{R}\liminf_{T\rightarrow\infty} \frac{1}{T}\E_x\left[\sum_{n:\tau_n\leq T}(\gamma(X_{\tau_n-}^R,z)-K)-\int_0^T \lambda X^R_t dt\right] + \lambda z.
	\end{equation*}	
	
	Using Proposition \ref{ControlToolboxTheorem} and Assumption \ref{annahmenExistenzGG3} we obtain that there is a unique threshold $y=y(\lambda,z)$.
	Let $y(z)$ be a threshold value satisfying $z= \mathbb{E}[X_\infty^{R(y)}]$. Then we obtain that all maximizers $z$ of \eqref{eq:restr_problemEW}  fulfil $y(z) \le y(0,z)$.
	Indeed, assume that $y(z) > y(0,z)$. Since $\gamma(y,z)$ is decreasing in the second component, we obtain for the value function $J_x$ from \eqref{eq:J_x2}
	$$J_x(R(y(z)),R(y(z))) \le J_x(R(y(0,z)), R(y(z)) \le J_x(R(y(0,z)),R(y(0,z))),$$
	which is a contradiction.
	Moreover, 
	\[\lim_{\lambda\to\infty}y({\lambda,z})=y_0\leq  y(z) \]
	and, again by the implicit function theorem and Assumption \ref{annahmenExistenzGG3}, the function $\lambda \mapsto y(\lambda,z)$ is continuous.
	Due to Lemma  \ref{ExpectationIncreasing}
	\[\lim_{\lambda\to\infty}\int w\Pi^{R(y(\lambda,z))}(dw)\leq z\leq \lim_{\lambda\to 0}\int w\Pi^{R(y(\lambda,z))}(dw).\]
	Therefore, there is a $\lambda_z$ such that 
	$z=\int w\pi^{R(y(\lambda_z,z))}(dw)$. Now the proof can be concluded as in the average harvesting rate case.
\end{proof}

\subsection{Computation and Comparison}

As in the case of the average harvesting rate we obtain the thresholds by maximizing now the function 
$$G:(y_0,\infty)\times \R_+;\ \ (x_1,x_2)\mapsto \frac{\gamma(x_1,\mathbb{E}[\hat{X}_\infty^{R(x_2)}])-K}{\xi(x_1)}$$
in those ways described in Section \ref{sec:comparison}. We remark that also here $G$ is differentiable, since $x_2\mapsto\mathbb{E}\left[ \hat{X}_\infty^{R(x_2)} \right]$ is differentiable due to \eqref{DensityOfStationaryProcess}.

Interestingly, also in this setting we obtain a comparison result, however, the relations hold in the different direction: 

\begin{thm}
	Let $y^p$ be an optimal threshold for the mean field type control problem and $y^g$ be a threshold describing a mean field equilibrium. Then $y^p \le y^g$.
\end{thm}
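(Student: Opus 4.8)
The plan is to mirror the contradiction argument used for the comparison result earlier in Section \ref{sec:comparison}, but with the monotonicity of $G$ in its second argument reversed. The whole argument rests on a single observation: in the present setting the mean field interaction enters through the stationary expectation $\mathbb{E}[\hat{X}_\infty^{R(x_2)}]$, which by Lemma \ref{ExpectationIncreasing} is \emph{increasing} in the threshold $x_2$, whereas the average harvesting rate in the first model was decreasing. Combined with the fact that $\gamma(x,z)$ is strictly decreasing in $z$ (Assumption \ref{annahmenExistenzGGEW}), this forces $x_2\mapsto G(x,x_2)$ to be strictly decreasing for every fixed $x>y_0$, exactly the opposite behaviour of the first model, which is why the conclusion flips.

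First I would argue by contradiction: suppose $y^p>y^g$. Both thresholds exceed $y_0$, so $x\mapsto\gamma(x,z)=(x-y_0)\varphi(z)$ has a strictly positive prefactor at $x=y^p$. By Lemma \ref{ExpectationIncreasing} the strict ordering $y^g<y^p$ yields $\mathbb{E}[\hat{X}_\infty^{R(y^g)}]<\mathbb{E}[\hat{X}_\infty^{R(y^p)}]$, and since $\gamma$ is strictly decreasing in its second argument I obtain $G(x,y^p)<G(x,y^g)$ for all $x>y_0$; in particular $G(y^p,y^p)<G(y^p,y^g)$.

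Next I would invoke the two optimality characterizations recalled in Section \ref{sec:comparison}. Because $y^g$ describes a mean field equilibrium, $y^g$ maximizes $x_1\mapsto G(x_1,y^g)$, so $G(y^p,y^g)\le G(y^g,y^g)$. Because $y^p$ is optimal for the mean field type control problem, $y^p$ maximizes $y\mapsto G(y,y)$, so $G(y^g,y^g)\le G(y^p,y^p)$. Chaining these with the strict inequality from the previous step gives $G(y^p,y^p)<G(y^p,y^g)\le G(y^g,y^g)\le G(y^p,y^p)$, a contradiction, whence $y^p\le y^g$.

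The argument is short once the ingredients are in place, so the only genuine subtlety is ensuring that the first inequality is \emph{strict}: this needs the strict stochastic ordering of Lemma \ref{ExpectationIncreasing} (which produces a strict gap in the expectations), the strict monotonicity of $\varphi$, and the strict positivity of $y^p-y_0$. I expect the main conceptual point — and the place where the two models genuinely diverge — to be precisely the replacement of ``harvesting rate decreasing in $y$'' by ``expected wood supply increasing in $y$'', so that no new estimate beyond Lemma \ref{ExpectationIncreasing} is required and the whole proof is a direct transcription of the Section \ref{sec:comparison} argument with one sign reversed.
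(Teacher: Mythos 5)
Your proposal is correct and follows essentially the same argument as the paper: the identical contradiction via the cyclic chain $G(y^p,y^p) < G(y^p,y^g) \le G(y^g,y^g) \le G(y^p,y^p)$, resting on Lemma \ref{ExpectationIncreasing} and the strict monotonicity of $\varphi$ in exactly the way the paper uses them. Your extra remark on where strictness is needed is a faithful (slightly more explicit) account of the paper's own reasoning, not a different route.
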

	
\begin{proof}
	Since $\gamma(x,z)$ is strictly decreasing in $z$ and $x \mapsto \mathbb{E}[X_\infty^{R(x)}]$ is increasing (see Lemma \ref{ExpectationIncreasing}), we obtain that $G(x,z)$ is decreasing in $z$. Assume that there is an equilibrium threshold $y^g$ and an optimal threshold for the mean field type control problem $y^p$ such that $y^g < y^p$. Then we obtain $$G(y^g,y^g)\le G(y^p,y^p) < G(y^p,y^g) \le G(y^g,y^g),$$ which is a contradiction.
\end{proof}
	
\section{Example}\label{sec:examples}
To illustrate our results, first, we consider the case of a classical logistic stochastic growth model. This is, the controlled process follows the dynamics
\[dX_t=X_t(a-bX_t)dt+\beta X_tdW_t,\]
where $a,b,\beta$ are positive constants. This diffusion is well-studied. We refer to \cite{MR3502394} for the results we use here and further references. The results there also yield that our assumptions are fulfilled. 

Using the notation $q:=1/2-a\beta^{-2}$, it is well-known that $X$ converges towards a unique stationary distribution if $q< 0$ and converges to 0 a.s. for $q>0$. We assume $q<0$ in the following.
Speed measure and scale function are, resp., given by the densities
\begin{align}
s(x)&=x^{2q-1}\exp\left(\frac{2}{\beta^2}b(x-1)\right)\label{eq:s_ex1}\\
m(x)&=\frac{2}{\beta^2}x^{-2q-1}\exp\left(-\frac{2}{\beta^2}b(x-1)\right).\label{eq:s_ex2}
\end{align}
In this case, the function $\xi$ is known (semi-) explicitly:
\[\xi(y)=\frac{1}{\beta^2|{q}|}\left(\log\left(\frac{y}{y_0}\right)+\sum_{n=1}^\infty\frac{1}{(1-2q)_n}\frac{(\rho y)^n}{n}-\sum_{n=1}^\infty\frac{1}{(1-2q)_n}\frac{(\rho y_0)^n}{n}\right),\]
where $(u)_n=u(u+1)\cdots(u+n-1)$ denotes the Pochhammer symbol and $\rho:=2b\beta^{-2}$. (The series may be represented using hypergeometric functions.) 
For each\ $y$ the expectation
	\[\mathbb{E}[X_\infty^{R(y)}]=\int_{-\infty}^y x \pi_{y_0,y} (x) \text{d}x\] 
	needed for the model considered in Section \ref{sec:EW} 
	can be calculated according to \eqref{DensityOfStationaryProcess} using \eqref{eq:s_ex1} and \eqref{eq:s_ex2}.

To study both models introduced above in parallel, we write
\[c(y):=\begin{cases}
	\frac{y-y_0}{\xi(y)}&\mbox{for the main model},\\
	\E \left[X^{R(y)}_\infty \right] &\mbox{for the model in Section \ref{sec:EW}.}
\end{cases}\]
For the mean field games all we have to find are thresholds $y^g$ such that
\[y^g=\argmax_y \frac{\gamma \left(y,c(y^g)\right)-K}{\xi(y)}.\]
For the mean field type control problem, the function
\begin{equation}\label{eq:ex_probl}
y\mapsto \argmax_y \frac{\gamma \left(y,c(y)\right)-K}{\xi(y)}
\end{equation}
has to be optimized.
As all expressions are known explicitly, this task can be carried out straightforwardly. Here we use the following set of parameters 
$$q := -1,\,b := 1/2,\,\beta := 1,\,y_0 := 1,\,\, K := 1.
$$

\subsection{A reward function yielding unique solutions}

Let us first choose $\gamma(y,z)=(y-y_0)/(z+1)$ for the main model. Then we obtain for the game a unique equilibrium threshold $y^g\approx 5.13$ with corresponding value $0.243$, cf. Theorem \ref{thm:unique eq}.  For the mean field type control problem the optimizer is $y^p\approx 5.9$ with value $0.254$. As discussed in Section \ref{sec:comparison}, the threshold $y^p$ is higher than\ $y^g$. 
The value in the mean field type control problem is of course higher than the value in the game.

\subsection{A reward function yielding multiple equilibria}\label{subsec:multiple}
As mentioned above, the model discussed in Section \ref{sec:EW} potentially allows for multiple equilibria. Indeed, let us now consider the reward function
$$\gamma:[y_0,\infty)\times \R_+\to\R_+; \ (y,z)\mapsto \frac{y-y_0}{1+\exp(10(z-1.9) )}.$$ Having this logistic dependence on $z$ yields three equilibria which are approximately at the points $y^g_1\approx 4.55$, $y^g_2\approx 6.8$ and $y^g_3\approx 55.5$. While the first and the last one of them are stable in the sense that when starting with a value $y_1$ in an interval around the equilibrium point the iteration used to numerically determine the equilibrium points defined by\[ y_{n+1}=\argmax_{\tilde y} \frac{\gamma \left(\tilde y,\E X^{R(y_n)}_\infty \right)-K}{\xi(\tilde{y})}\] for all $n \in \N$ will converge to $y^g_1$ and $y^g_3$ respectively, this is not the case for $y^g_2$.

\appendix
\section{Solving the Auxiliary Control Problem(s)}\label{sec:imp_control_toolbox}

In this section we  prove Proposition \ref{ControlToolboxTheorem}. First, we derive a verification result. Thereafter, we present a candidate for the value function and the optimal threshold by relying on an associated stopping problem, for which we then prove that it indeed satisfies the conditions of the verification result.

\begin{lemma}[Verification result]\label{veridrei}
	\begin{enumerate}[(i)]
		\item
	 Let $g$ be a measurable function on $\R_+$, let $u$ be defined by \[u(x,y)=f(x)-f(y)-K-g(x)+g(y)\] for all $x, y\in \R_+$, $y\leq x$, and assume  \begin{enumerate}
			
			\item  $M=({g(X_t)}-\int_0^t (h(X_s)+ \rho)   ds)_{t\geq0}$ is a supermartingale  under $\Pro_{x}$ for all $x\in \R_+$,
			\item \[\limsup_{T\rightarrow\infty}\frac{\E_{x}g(X^\Qs_T)}{T}\geq 0
			\text{ for all admissible } \Qs, x\in \R_+, \]
			\item \[u(x,y_0) \leq 0 \text{  for all } x\in \R_+, \]
		\end{enumerate} Then
		\[v^h(x)\leq \rho \text{ for all } x\in \R_+.\]
		
		\item\label{item:b} If furthermore $\Qs^*=(\tau_n^*)_{n \in \N}$ is admissible and such that
		\begin{enumerate}
			\item Using the notation $M^\Qs_t:={g(X^\Qs_t)}-\int_0^t (h(X^\Qs_s)+ \rho) \   ds$ for all $Q$, we have 
			 $$\E_{x}\left(M_{\tau^*_{n}-}^{\Qs^*}-M_{\tau^*_{n-1}}^{\Qs^*}\right)=0 \text{ for all }n \in \N,\ x \geq 0,$$ 
			\item 
			\[\lim_{T\rightarrow\infty}\frac{\E_{x}g(X^{\Qs^\ast}_T)}{T}=0 \text{  for all } x\in \R_+, \]
			
			\item  \[u(X^{\Qs^\ast}_{\tau_n^*-},y_0)=0  \ \ \Pro_{x} \text{-a.s. for all } x\in \R_+, \]
		\end{enumerate}
		then \[ v^h(x)= \rho, \text{ for all } x \in \R_+\]
		and $\Qs^*$ is optimal.
	\end{enumerate}
\end{lemma}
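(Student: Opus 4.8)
The plan is to derive one exact pathwise accounting identity for the finite-horizon reward and then obtain both assertions by reading off when the estimates in it are tight. Fix an admissible strategy $Q=(\tau_n)_n$, put $\tau_0=0$ and $N(T)=\max\{n:\tau_n\le T\}$, and abbreviate
$$
R_T^{Q}:=\sum_{n:\tau_n\le T}\bigl(f(X^{Q}_{\tau_n-})-K\bigr)-\int_0^T h(X^{Q}_s)\,ds .
$$
Since every impulse restarts the process at $y_0$ and $f(y_0)=0$, the definition of $u$ yields the elementary splitting $f(X^{Q}_{\tau_n-})-K=u(X^{Q}_{\tau_n-},y_0)+\bigl(g(X^{Q}_{\tau_n-})-g(y_0)\bigr)$, where $g(X^{Q}_{\tau_n-})-g(y_0)$ is the negative of the jump of $t\mapsto g(X^{Q}_t)$ at $\tau_n$. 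Telescoping $g(X^{Q}_{\cdot})$ along $0,\tau_1-,\tau_1,\dots,\tau_{N(T)}-,\tau_{N(T)},T$ and completing the compensator of $M^{Q}_t=g(X^{Q}_t)-\int_0^t(h(X^{Q}_s)+\rho)\,ds$ by adding and subtracting $\int_0^T\rho\,ds$, I arrive at
$$
R_T^{Q}=\sum_{n=1}^{N(T)}u(X^{Q}_{\tau_n-},y_0)+\bigl(\widetilde M^{Q}_T-\widetilde M^{Q}_0\bigr)+g(x)-g(X^{Q}_T)+\rho T,
$$
where $\widetilde M^{Q}$ denotes $M^{Q}$ with its jumps at the $\tau_n$ removed; its increment over $[0,T]$ equals $\sum_{n=1}^{N(T)}(M^{Q}_{\tau_n-}-M^{Q}_{\tau_{n-1}})+(M^{Q}_T-M^{Q}_{\tau_{N(T)}})$. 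By the strong Markov property and the supermartingale hypothesis, $\widetilde M^{Q}$ is a supermartingale started at $g(x)$.

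For the upper bound I take $\E_x$, divide by $T$ and let $T\to\infty$. The quasi-variational inequality $u(\cdot,y_0)\le0$ makes the impulse sum nonpositive, and $\E_x[\widetilde M^{Q}_T-\widetilde M^{Q}_0]\le0$ by the supermartingale property evaluated at the fixed time $T$ (which is exactly what dissolves the difficulty of the random number $N(T)$ of excursions). Hence
$$
\frac1T\,\E_x\bigl[R_T^{Q}\bigr]\le\rho+\frac{g(x)}{T}-\frac{\E_x g(X^{Q}_T)}{T},
$$
and taking $\liminf_{T\to\infty}$ and invoking hypothesis (b), which gives $-\limsup_{T\to\infty}\E_x g(X^{Q}_T)/T\le0$, produces $\liminf_{T\to\infty}\frac1T\E_x[R_T^{Q}]\le\rho$. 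As $Q$ was arbitrary, $v^h(x)\le\rho$.

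For optimality I apply the identity to $Q=Q^{\ast}$ and show each estimate is attained. The indifference condition $u(X^{Q^\ast}_{\tau_n^\ast-},y_0)=0$ annihilates the impulse sum; the martingale condition on complete excursions makes $\widetilde M^{Q^\ast}$ mean-preserving cycle by cycle, so that $\E_x[\widetilde M^{Q^\ast}_T-\widetilde M^{Q^\ast}_0]/T\to0$, the only leakage coming from the incomplete last cycle $[\tau_{N(T)},T]$, which is $O(1)$ by renewal theory (its length and the integral $\int_{\tau_{N(T)}}^{T}(h+\rho)$ are integrable, using the linear growth of $h$); and condition (b) for $Q^\ast$ forces $\E_x g(X^{Q^\ast}_T)/T\to0$. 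Therefore $\lim_{T\to\infty}\frac1T\E_x[R_T^{Q^\ast}]=\rho$, which combined with the upper bound gives $v^h(x)=\rho$ and the optimality of $Q^{\ast}$.

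Beyond the purely algebraic telescoping, the one genuinely delicate point is the probabilistic bookkeeping of the continuous part $\widetilde M^{Q}$: one must verify that gluing the excursion-wise supermartingale (resp. martingale) structure through the strong Markov property at the impulse times indeed produces a supermartingale in the required integrable sense, and one must show that the incomplete terminal cycle together with the boundary term $g(X^{Q}_T)/T$ is asymptotically negligible. This is precisely where the admissibility and integrability assumptions, the growth bound on $h$, and hypothesis (b) are used; the deterministic identity itself is routine.
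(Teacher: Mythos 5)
Your proof is correct and follows essentially the same route as the paper's: the same telescoping of the reward along the impulse times via $u$ and $M^{\Qs}$, the same use of optional sampling/supermartingale structure to control the continuous part (your bound $\E_x[\widetilde M^{\Qs}_T-\widetilde M^{\Qs}_0]\le 0$ is exactly the paper's sum of excursion-wise optional-sampling inequalities $\E_x[M^{\Qs}_{\tau_k\wedge T-}-M^{\Qs}_{\tau_{k-1}\wedge T}]\le 0$), and the same role for hypotheses (i).(b)--(c) and (ii).(a)--(c). The only cosmetic difference is that you isolate an exact pathwise identity before estimating, and you are somewhat more explicit than the paper about the asymptotically negligible incomplete final cycle, where the paper simply asserts ``equality in each step.''
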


\begin{proof}
	We first fix an admissible $\Qs=(\tau_n)_{n\in\N}$ and $T>0$. Since the process $X^\Qs$ runs uncontrolled on each stochastic interval $[\tau_{k-1},\tau_k)$, the optional sampling theorem yields that 
 $\E_x[M^\Qs_{\tau_k\wedge T-}-M^\Qs_{\tau_{k-1}\wedge T}]\leq 0$ for each $k\in \N$, $x\in \R_+$. 
	Hence
	\begin{align*}
	& \E_x\left[\sum_{n \in \N :\tau_n\leq T}(f(X^\Qs_{\tau_n-})-{K}) -\int_0^T h(X^\Qs_s) \ ds \right]
	\\&\leq \E_x\left[\sum_{n \in \N:\tau_n\leq T} (f(X^\Qs_{\tau_n-})- K) -\sum_{k=1}^\infty \left(M^\Qs_{\tau_k\wedge T-}-M^\Qs_{\tau_{k-1}\wedge T}\right)-\int_0^T h(X^\Qs_s) \ ds\right]\\
	&=\E_x\left[\sum_{n \in \N:\tau_n\leq T}(f(X^\Qs_{\tau_n}) -K) -\sum_{k=1}^\infty \left(g(X^\Qs_{\tau_k\wedge T-})-g(X^\Qs_{\tau_{k-1}\wedge T}) \right.\right.\\& \left.\left.\textcolor{white}{-\sum_{k=1}^\infty}-\int_{\tau_{k-1}\wedge T}^{\tau_{k}\wedge T}(h(X^\Qs_s) +\rho) \ ds \right)-\int_0^T h(X^\Qs_s) \ ds\right] \\
	&=\E_x\left[\sum_{1\leq n:\tau_n\leq T}\left(f(X^\Qs_{\tau_n-})-K-g(X^\Qs_{\tau_n-})+g(y_0)\right)-g(X^\Qs_T)+g(X^\Qs_0)+\rho T\right]\\
	&=\E_x\left[\sum_{1\leq n:\tau_n\leq T}u(X^\Qs_{\tau_n-},y_0)\right]-\E_x \left[g(X^\Qs_T)\right] +g(x)+\rho T
	\\&\leq - \E_x g(X^\Qs_T)+g(x)+\rho T,
	\end{align*} where we use i.(c) and that $f(y_0)=0$ by assumption.
	
	Dividing by $T$ and taking the limit $T\rightarrow\infty$, we obtain the first assertion using i.(b) . The additional assumptions in \eqref{item:b} guarantee that we have equality in each step for the strategy ${\Qs}^*$. 
\end{proof}

We now provide a candidate and verify that this candidate satisfies the assumptions in Lemma \ref{veridrei}. 
The intuition for our candidate below is as follows: We first find one value $y$ from which shifting the process back to $y_0$ yields the maximal reward per time unit. Due to the continuity of $X$, we will always hit this point $y$ and therefore going on like this should yield an optimal strategy. 
More precisely, define
\begin{align*}
	\yy &:=\argmax_{y \in [y_0,\bar{y}]}\frac{f(y)-K-\E_{y_0}\left(\int_{0}^{\tau_y}h(X_s) \ ds\right)}{\xi(y)}, \\
	\rho^*&:=\max_{y \in [y_0,\bar{y}]}\frac{f(y)-K-\E_{y_0}\left(\int_{0}^{\tau_y}h(X_s) \ ds\right)}{\xi(y)},
\end{align*} and $Q^\ast = R(y^\ast)$. 
Moreover, set for all $x \in \mathbb{R}_+$ 
\begin{align*}
g(x):=\sup_{\tau \in \mathcal{T}_{y_0}} \E_x \left[f(X_\tau)-K -\int_0^\tau ( h(X_s)+\rho^*)  \ ds\right], \end{align*}
where $$\mathcal{T}_{y_0} := \{ \tau \text{ stopping time } | X_\tau \ge y_0 \text{ a.s.}\}.$$
Note that $g(x)$ is the value function of a classical stopping problem.

\begin{lemma}
	The function $g$, the constant $\rho^\ast$ and the strategy $R(\tau_\yy)$ fulfil the requirements in Lemma \ref{veridrei}.
\end{lemma}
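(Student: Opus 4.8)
The plan is to verify, one by one, the six conditions (i)(a)--(c) and (ii)(a)--(c) of Lemma~\ref{veridrei} for the triple $(g,\rho^*,R(\yy))$, viewing $g$ as the value function of a classical optimal stopping problem: obstacle $f-K$, admissible stopping only on $\{x\ge y_0\}$, and running cost $h+\rho^*$. The conceptual engine is one identity. For the threshold rule that drives the process from $y_0$ up to a level $y$, the stopping reward is
\[
\E_{y_0}\!\left[f(X_{\tau_y})-K-\int_0^{\tau_y}\!\big(h(X_s)+\rho^*\big)\,ds\right]=\xi(y)\,\big(k(y)-\rho^*\big),\qquad k(y):=\frac{f(y)-K-\E_{y_0}[\int_0^{\tau_y}h(X_s)\,ds]}{\xi(y)}.
\]
Since $\rho^*=\max_y k(y)$ is attained exactly at $\yy$, this reward is $\le 0$ for every $y$ and equals $0$ only at $y=\yy$. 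This single fact will pin down $g(y_0)=0$ and, together with the threshold shape of the stopping region, the optimality of $R(\yy)$.

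I would begin with the structural facts about $g$ coming from standard optimal stopping theory for regular one-dimensional diffusions: $g$ is finite and continuous, $M_t=g(X_t)-\int_0^t(h(X_s)+\rho^*)\,ds$ is a $\Pro_x$-supermartingale for every $x$ (this is exactly condition (i)(a)), and $M$ is a genuine martingale up to the first entry into the stopping region, on which $g=f-K$. Using the monotonicity of $\mu$ and the resulting single sign change of $\xi''$ from Lemma~\ref{xiconvexityLEM}, together with the monotonicity of the obstacle $f-K$, I would show that the continuation region is the interval $(0,\yy)$ and the stopping region is $[\yy,\infty)$; in particular the optimal stopping time from $y_0$ is the threshold time $\tau_{\yy}$ and $g(\yy)=f(\yy)-K$. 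Since $\tau_{\yy}$ is then optimal from $y_0$, the engine identity gives $g(y_0)=\xi(\yy)(k(\yy)-\rho^*)=0$.

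With $g(y_0)=0$ and $g(\yy)=f(\yy)-K$ the remaining conditions are short. Condition (i)(c) reads $u(x,y_0)=f(x)-K-g(x)\le 0$, i.e.\ $g(x)\ge f(x)-K$: for $x\ge y_0$ this holds by stopping immediately, and for $x<y_0$ I would run the process up to $y_0$, use $g(y_0)=0$ and the supermartingale property, reducing the claim to a bound on the expected accumulated cost before hitting $y_0$, which is finite because $0$ is an entrance boundary. Condition (ii)(c) is the direct evaluation $u(X^{Q^*}_{\tau_n^*-},y_0)=u(\yy,y_0)=f(\yy)-K-g(\yy)+g(y_0)=0$, where continuity of $X$ forces $X^{Q^*}_{\tau_n^*-}=\yy$. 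Condition (ii)(a) holds because on each excursion interval $[\tau^*_{n-1},\tau^*_n)$ the controlled process stays in the continuation region $(0,\yy)$, where $M^{Q^*}$ is a martingale, so optional sampling gives the vanishing expected increment once integrability is checked (both $g$ and the cost are bounded on $(0,\yy]$ and $\tau$ has finite mean).

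The genuinely technical part, and the main obstacle, is coupling the pointwise stopping problem to the ergodic constant: establishing that the stopping region is exactly $[\yy,\infty)$ and the identity $g(y_0)=0$. This is where Lemma~\ref{xiconvexityLEM} and the extremal characterisation of $(\yy,\rho^*)$ are essential. The two growth conditions are comparatively mild: for (i)(b) it suffices to note that $g$ is bounded below---on $[y_0,\infty)$ one has $g\ge f-K\ge -K$, and near $0$ the entrance property bounds the cost of reaching $y_0$---so that $\E_x[g(X^Q_T)]/T\ge -C/T\to 0$; for (ii)(b), under $R(\yy)$ the process is positively recurrent and bounded above by $\yy$, so $g(X^{Q^*}_T)$ is bounded and its Ces\`aro average tends to $0$. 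Assembling the six verified conditions, Lemma~\ref{veridrei} yields $v^h\equiv\rho^*$ with $R(\yy)$ optimal, which is precisely Proposition~\ref{ControlToolboxTheorem}.
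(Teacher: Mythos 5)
Your overall architecture matches the paper's: interpret $g$ as the value of the constrained stopping problem, use the identity $\E_{y_0}[f(X_{\tau_y})-K-\int_0^{\tau_y}(h(X_s)+\rho^*)\,ds]=\xi(y)\bigl(k(y)-\rho^*\bigr)$ to pin down $g(y_0)=0$ and $g(\yy)=f(\yy)-K$, get (i)(c) from "immediate stopping is admissible on $[y_0,\infty)$", get (ii)(c) by evaluation at $\yy$, and handle the transversality conditions via lower bounds on $g$ coming from the entrance-boundary assumption and invariant distributions. Indeed, your argument for (i)(b) (a single global lower bound $g\ge -C$, so $\E_x[g(X_T^Q)]/T\ge -C/T\to 0$ for \emph{every} admissible $Q$) is cleaner than the paper's, which couples $X^Q$ with the process reflected at $y_0$ and additionally argues that $g$ is increasing. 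However, there is a genuine gap at the crux. Everything in your proof funnels through the claim that the continuation region is exactly $(0,\yy)$ and the stopping region exactly $[\yy,\infty)$ --- this is what you use to get the optimality of $\tau_{\yy}$, the identity $g(y_0)=0$, and (via the martingale property of $M$ on the continuation region) condition (ii)(a). You never prove this claim; you only assert that it "would" follow from Lemma~\ref{xiconvexityLEM} and monotonicity of the obstacle, and you yourself flag it as the main obstacle. Moreover, the proposed route is doubtful in the generality in which the lemma is stated: in the appendix $f$ is merely increasing and continuous with $f(y_0)=0$, and $h$ is an arbitrary nonnegative continuous function of linear growth, so the running cost need not be monotone and the stopping region need not be an up-connected interval; neither Lemma~\ref{xiconvexityLEM} nor Lemma~\ref{Lemma:UniqueOptimizer} (which concern the specific ratio $(y-y_0-\tilde K)/\xi(y)$) applies to a generic pair $(f,h)$.

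The paper's proof shows that no such characterization of the stopping region is needed. From the starting point $y_0$, general optimal stopping theory gives that first entrance times into nonempty closed sets $S_\epsilon\subseteq[y_0,\infty)$ are $\epsilon$-optimal; since stopping is only permitted on $[y_0,\infty)$ and paths are continuous, every such entrance time started from $y_0$ coincides with the one-sided hitting time $\tau_{y_\epsilon}$, $y_\epsilon=\min S_\epsilon$. Hence $g(y_0)$ equals the supremum over thresholds of $f(y)-K-\E_{y_0}[\int_0^{\tau_y}h(X_s)\,ds]-\rho^*\xi(y)$, which is $0$ by the very definition of $\rho^*$ and is attained at $\yy$; optimality of $\tau_{\yy}$ from $y_0$ and $g(\yy)=f(\yy)-K$ follow at once, with no convexity input. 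Condition (ii)(a) is then a two-line computation,
\begin{align*}
\E_{y_0}\!\left[g(X_{\tau_{\yy}-})-\int_0^{\tau_{\yy}}(h(X_s)+\rho^*)\,ds\right]-g(y_0)
= g(\yy)-\bigl(f(\yy)-K\bigr)=0,
\end{align*}
using only these two identities and the definition of $\rho^*$ --- no appeal to a martingale property on the continuation region. To repair your proof, replace the unproven region characterization by this entrance-time argument (or supply an actual proof of the characterization under the restricted assumptions of the main text, accepting the loss of generality).
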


\begin{proof}
	\textit{$M=({g(X_t)}-\int_0^t (h(X_s)+ \rho^\ast)   ds)_{t\geq0}$ is a supermartingale:} This is well-known by the standard theory of optimal stopping, but we give a direct proof here using the reverse optional sampling theorem. Let $\tau$ be a bounded stopping time. Then, using the time shift operator $\theta$,
	\begin{align*}
\E_x \left[M_\tau \right]&=	\E_x\left[{g(X_\tau)} -\int_0^\tau (h(X_s)+ \rho^\ast)   ds \right]
\\&=	\E_x \left[\sup_{\sigma \in \mathcal{T}_{y_0}}
\E_{X_\tau}\left[f(X_{\sigma})-K-\int_0^\sigma (h(X_s)+ \rho^\ast)   ds \right]-\int_0^\tau (h(X_s)+ \rho^\ast)   ds \right]
\\&=	\E_x \left[\sup_{\sigma \in \mathcal{T}_{y_0}}\E_{x}\left[f(X_{\sigma\circ \theta_\tau+\tau}) - K-\int_0^{\sigma\circ \theta_\tau+\tau} (h(X_s)+ \rho^\ast)   ds \mid \mathcal F_\tau\right] \right]
\\&\leq	\E_x \left[\sup_{\sigma \in \mathcal{T}_{y_0}; \sigma \geq \tau}\E_{x}\left[f(X_{\sigma})-K-\int_0^{\sigma} (h(X_s)+ \rho^\ast)   ds \mid \mathcal F_\tau\right]\right]
\\& \leq \sup_{\sigma \in \mathcal{T}_{y_0}}	\E_x \left[\E_{x}\left[f(X_{\sigma})-K-\int_0^{\sigma} (h(X_s)+ \rho^\ast)   ds \mid \mathcal F_\tau\right] \right]
\\& = g(x)
\\&= M_0
\end{align*}

\textit{The inequalities (i).c and (ii).c hold:} For this we investigate the stopping problem associated to $g$. 
	As a first step we note that by the general theory of optimal stopping, first entrance times into non-empty closed sets $S_\epsilon$ are $\epsilon$-optimal. (Note that problems with linear running costs are time homogenous, see, e.g., \cite{MR2083932}).
	 Since $X$ has continuous sample paths and due to the one-sided nature of the attainable stopping times, we have that\ $\Pro_{y_0}$-a.s. the first entrance into $S_\epsilon$ are identical to the first hitting times $\tau_{y_\epsilon}$, where $y_\epsilon=\min S_\epsilon$. Thus, we obtain
\begin{align*}
g(y_0)&=\sup_{y\geq y_0}\E_{y_0} \left[f(X_{\tau_y})-K -\int_0^{\tau_y} ( h(X_s)+\rho^*)  \ ds\right]\\
&=\sup_{y\geq y_0}\left[f(y)-K -\E_{y_0} \left[\int_0^{\tau_y}  h(X_s)  \ ds\right] -\max_{z \geq y_0}\frac{f(z)-K-\E_{y_0}\left[\int_{0}^{\tau_z}h(X_s) \ ds\right]}{\xi(z)}\xi(y)\right]\\
&=0.
\end{align*}
	Furthermore, we see that by construction the first hitting time of $y^\ast$ is optimal. In particular, this yields $g(y^\ast) = f(y^\ast) -K$.

Since immediate stopping is possible, we have $g \ge f-K$. This implies that
\begin{align*}
u(x,y_0) &= f(x)-f(y_0) - K - g(x) + g(y_0) = f(x)-K-g(x) \le 0,
\end{align*} which is (i).c. 
Analogously, we obtain, since $y^\ast$ is in the stopping region that $$u(X_{\tau_n^\ast-},y_0) = u(y^\ast, y_0) = 0.$$ 

\textit{(ii).a holds}:  The observation that $g(y^\ast) = f(y^\ast)-K$ directly yields that
\begin{align*}
	\mathbb{E}_x[ M_{\tau_n^\ast -} - M_{\tau_{n-1}^\ast}] &= \mathbb{E}_{y_0} \left[ g(X_{\tau_{y^\ast}-}) - \int_0^{\tau_{y^\ast}} (h(X_s) + \rho^\ast) ds -g (y_0) \right]\\&=
	\mathbb{E}_{y_0} \left[ g(y^\ast) - \int_0^{\tau_{y^\ast}} h(X_s)  ds -g (y_0) \right]- \rho^\ast\E_{y_0}\left[\tau_{y^\ast}\right]\\&= g(y^\ast) - (f(y^\ast) -K) = 0.
\end{align*}

	\textit{the transversality conditions (i).b and (ii).b hold:}
	As pointed out in Remark \ref{RemarkThresholdStrategies}, for any threshold strategy $R(y)$, $y \ge y_0$, the controlled process $X^{R(y)}$ has an invariant distribution.
	Denote the invariant measure for $Q^\ast = R(y^\ast)$ by $\Pi^{Q^\ast}$. 
	Thus, for all $y \in \mathbb{R}_+$ we obtain 
	$$ \lim_{T\rightarrow\infty} \E_{y}g \left[X^{\Qs^*}_T\right]=\int_0^{y^*}g(x)\pi^{\Qs^*}(dx).$$
Because $h$ is bounded on $[0,y^\ast]$ and $\tau_{y^\ast}$ is an optimal stopping time for the problem with value function $g$, we obtain for some $d>0$

 \begin{align*}
g(x)&=f(y^\ast)-K -\E_x \left[\int_0^{\tau_{y^\ast}} ( h(X_s)+\rho^*)  \ ds\right]\\
&\geq f(y^\ast)-K-d\E_x\tau_{y^\ast}\\
&=f(y^\ast)-K-d\left(\int_{x}^{y^\ast}(S(y^\ast)-S(y))m(y)\ dy+(S(y^\ast)-S(x))M[0,x]\right)\\
&\geq f(y^\ast)-K-d\int_{0}^{y^\ast}(S(y^\ast)-S(y))m(y)\ dy.
\end{align*}
As 0 was assumed to be an entrance-boundary, the last integral is finite and hence $g$ is bounded from below. Since $f$ is continuous, the function $g$ is furthermore bounded from above on compacts. Therefore, we obtain that  $$\lim\limits_{T\rightarrow \infty}\frac{\E_{x} [ g(X^{\Qs^*}_T)]}{T}=0 \text{ for all }x \in \R_+.$$

	Now take an arbitrary strategy ${\Qs}$ and denote with $X^r$ the process $X$ reflected at $y_0$.
	By the same coupling argument as in the proof of Lemma \ref{lem:bounds_z}, we can assume $X^r \leq X^ {\Qs}$.
	It is well known that in our setting $X^r$ has an invariant distribution $\Pi^r$. Further, as mentioned above, $g$ is bounded on $(0,y_0)$. 
	Utilizing that $f$ is increasing  and $h$ is a positive function a simple calculation yields that $g$ is increasing, which implies that $g(X^{\Qs})\geq g(X^r)$. 
	In total we obtain for all $y \in \mathbb{R}_+$ $$\liminf\limits_{T\rightarrow \infty}\E_{y}[g(X_T^Q)]\geq \liminf\limits_{T\rightarrow \infty}\E_{y}[g(X_T^r)] = \int g(x) \Pi^r(d x) \in \R$$ and therefore for all $y \in \R_+$ $$\limsup\limits_{T\rightarrow \infty}\frac{\E_{y}[g(X^\Qs_T)]}{T}\ge \liminf\limits_{T\rightarrow \infty}\frac{\E_{y}[g(X^r_T)]}{T}= 0.$$
\end{proof}
\begin{remark}
The renewal reward theorem directly yields that for each $y>y_0$ the value we get by using $R(y)$ equals 	$$\frac{f(y)-K-\E_{y_0}\left[\int_{0}^{\tau_y}h(X_s) \ ds\right]}{\xi(y)}.$$ Hence, whenever $\yy$ is the unique maximizer of $$y\mapsto \frac{f(y)-K-\E_{y_0}\left[\int_{0}^{\tau_y}h(X_s) \ ds\right]}{\xi(y)},  $$ then $R(\yy)$ is the unique optimal strategy in the set of threshold strategies. 
\end{remark}

\addcontentsline{toc}{chapter}{Bibliography}

\bibliography{literatur}

\end{document}